\newfont{\cyr}{wncyr10} 
\newcommand{\map}[1]{\;\xrightarrow{#1}\;}
\newcommand{\Sha}{\mbox{\cyr Sh}}
\newcommand{\Gal}{\mathrm{Gal}}
\newcommand{\Hom}{\mathrm{Hom}}
\newcommand{\Q}{\mathbf Q}
\newcommand{\Z}{\mathbf Z}
\newcommand{\cO}{\mathcal O}
\newcommand{\gm}{\mathfrak m}
\newcommand{\inert}{\mathcal{I}}
\newcommand{\Sel}{\mathrm{Sel}}
\newcommand{\ev}{\mathrm{ev}}
\newcommand{\taut}{\omega_\mathrm{taut}}
\newcommand{\Mod}{\mathbf{Mod}}
\newcommand{\sel}{\mathcal F}
\newcommand{\msel}{{/\sel}}
\newcommand{\seldual}{{\sel^\vee}}
\newcommand{\altsel}{\mathcal G}
\newcommand{\rel}{\mathrm{rel}}
\newcommand{\loc}{\mathrm{loc}}
\newcommand{\res}{\mathrm{res}}
\newcommand{\rank}{\mathrm{rank}}
\newcommand{\corank}{\mathrm{corank}}
\newcommand{\Iw}{\mathrm{Iw}}
\newcommand{\inv}{\mathrm{inv}}
\newcommand{\ord}{\mathrm{ord}}
\newcommand{\Der}{\mathrm{Der}}
\newcommand{\unr}{\mathrm{unr}}
\newcommand{\Adual}{A^\vee}
\newcommand{\bS}{\mathbf{S}}
\newcommand{\Fil}{\mathrm{Fil}}
\newcommand{\gr}{\mathrm{gr}}
\newcommand{\pl}{\mathcal{L}}
\newcommand{\mil}{\lim\limits_\leftarrow}
\newcommand{\dlim}{\lim\limits_\rightarrow}
\newcommand{\iso}{\cong}
\begin{document}
\title{Derived P-adic heights and p-adic L-functions}
\author{Benjamin Howard}
\address{Department of Mathematics\\ Stanford University\\ Stanford, CA\\
94305}
\curraddr{Department of Mathematics\\ Harvard University\\ Cambridge, MA\\
02138}
\subjclass[2000]{11G05, 11R23}
\thanks{This research was partially conducted by the author for the Clay
Mathematics Institute.}

\begin{abstract}
If $E$ is an elliptic curve defined over a number field and $p$
is a prime of good ordinary reduction for $E$, a theorem of 
Rubin \cite{rubin-height}, Thm. 1, 
relates the $p$-adic height pairing on the $p$-power Selmer
group of $E$ to the first derivative of 
a cohomologically defined $p$-adic $L$-function attached to $E$.  
Bertolini and Darmon \cite{derived-heights} have defined 
a sequence of ``derived'' $p$-adic heights. In this paper
we give an alternative definition of the $p$-adic height pairing and 
prove a generalization
of Rubin's result, relating the derived heights to higher derivatives 
of $p$-adic $L$-functions.  We also relate degeneracies in the derived
heights to the failure of the Selmer group of  $E$ over a 
$\mathbf{Z}_p$-extension to be ``semi-simple'' as an Iwasawa module,
generalizing results of Perrin-Riou \cite{perrin-riou}.
\end{abstract}

\maketitle

\theoremstyle{plain}
\newtheorem{Thm}{Theorem}[section]
\newtheorem{Prop}[Thm]{Proposition}
\newtheorem{Lem}[Thm]{Lemma}
\newtheorem{Cor}[Thm]{Corollary}
\newtheorem{Conj}[Thm]{Conjecture}

\theoremstyle{definition}
\newtheorem{Def}[Thm]{Definition}

\theoremstyle{remark}
\newtheorem{Rem}[Thm]{Remark}
\newtheorem{Ques}[Thm]{Question}

\renewcommand{\labelenumi}{(\alph{enumi})}
\setcounter{section}{-1}

\section{Introduction and Notation}

Fix forever a rational prime $p>2$. By a coefficient ring we 
mean a commutative ring which is complete, Noetherian, and 
local with residue field of characteristic $p$.
Fix  a finite set of places $\Sigma$ of a number field 
$F$ containing all archimedean places and all primes 
above $p$, and let $G_\Sigma=\Gal(F_\Sigma/F)$ where $F_\Sigma$ is the 
maximal extension of $F$ unramified outside of $\Sigma$.  
For any coefficient ring $R$, denote by $\Mod_\Sigma(R)$
the category of free $R$-modules of finite type equipped with continuous,
$R$-linear actions of $G_\Sigma$.
The notation $M(k)$ for any $G_\Sigma$-module $M$
means Tate twist, as usual.  

Throughout this article we work with a fixed coefficient ring 
$\cO$, which is assumed  to be 
topologically discrete (at least until Section \ref{semi-simple}), 
for example $\cO=\Z/p^k\Z$.
With $F$ as above let $F_\infty/F$ be a $\Z_p$-extension.
If $F_n\subset F_\infty$ is the unique subfield
with $[F_n:F]=p^n$, we define
$$\Gamma_n=\Gal(F_n/F)\hspace{1cm}\Gamma=\Gal(F_\infty/F)\hspace{1cm}
\Lambda_n=\cO[\Gamma_n]\hspace{1cm}\Lambda=\cO[[\Gamma]],$$ and denote
by $J$ the augmentation ideal of $\Lambda$.
Let $\gamma\in\Gamma$ be a topological generator and denote by $\iota$
the involution of $\Lambda$ induced by 
$\gamma\mapsto \gamma^{-1}$.  We will also view $\iota$ as a functor
$M\mapsto M^\iota$ from the category of $\Lambda$-modules to itself,
where the underlying group of $M^\iota$ is the same as that of $M$
but with $\Lambda$ acting through 
$\Lambda\map{\iota}\Lambda\map{}\mathrm{End}_\cO(M)$.

In Section \ref{construction} we consider two objects $S$, $T$
of $\Mod_\Sigma(\cO)$ which are assumed to be in Cartier duality: i.e.
we assume that there exists a perfect $\cO$-bilinear, $G_\Sigma$-equivariant
pairing $S\times T\map{}\cO(1).$  For such objects we define
generalized Selmer groups 
$$H^1_\sel(F,S_\infty)\subset \dlim H^1(F_n,S)\hspace{1cm}
H^1_\altsel(F,T_\infty)\subset \dlim H^1(F_n,T)$$
and show that there is a canonical (up to sign) height pairing
$$H^1_\sel(F,S_\infty)\times H^1_\altsel(F,T_\infty)\map{}
J/J^2$$ whose kernels on either side are the submodules of \emph{universal
norms} in the sense of Definition \ref{norm def}.

We continue to work in great generality in Section \ref{derivative}, 
using the construction of Section \ref{construction} to define 
derived height pairings similar to those of Bertolini and Darmon.  
A theorem of Rubin \cite{rubin-height}, Thm. 1,
relates the $p$-adic height pairing to
the special values of the first derivatives of certain (cohomologically
defined) $p$-adic $L$-functions, and we prove similar formulas relating 
the derived heights to special values of higher derivatives.

In Section \ref{abelian varieties} we consider the special case where
$S$ and $T$ arise from torsion points on an abelian variety with
good, ordinary reduction at primes of $F$ above $p$, and show that
our Selmer groups agree with the usual ones (up to a controlled error).

In Section \ref{semi-simple} we continue to work with torsion points
on an abelian variety $A$, and explain how degeneracies in the
derived heights are reflected in the structure of the 
Selmer group of $A$ over $F_\infty$
as an Iwasawa module, generalizing work of Perrin-Riou
\cite{perrin-riou}.

The reader is encouraged to begin by reading the results of 
Section \ref{semi-simple}, in particular Theorems \ref{vector height} and 
\ref{height formula}, and Corollary \ref{semi-simplicity}.


\section{Construction of the $p$-adic height pairing}
\label{construction}
  
We wish to work with a very general notion of Selmer group, borrowing 
some notation and conventions from \cite{mazur-rubin}.

\begin{Def}
Suppose $R$ is a coefficient ring and $M$ is topological $R$-module
equipped with a continuous $R$-linear action of $G_\Sigma$. 
A \emph{Selmer structure}, $\sel$, on $M$ is a choice of $R$-submodule
$$H^1_\sel(F_v,M)\subset H^1(F_v,M)$$
for every $v\in\Sigma$.  Given a Selmer structure, the associated
\emph{Selmer module} $H^1_\sel(F,M)$   is defined to be the kernel of
$$H^1(G_\Sigma,M)\map{}\bigoplus_{v\in\Sigma}
H^1(F_v,M)/H^1_\sel(F_v,M).$$ 
\end{Def}

 If we are given a Selmer structure $\sel$ on 
$M$ and a surjection $M\map{}M'$, we obtain a Selmer structure on $M'$
(still denoted $\sel$)
by taking the local condition at any $v\in\Sigma$ to be the image of
$$H^1_\sel(F_v,M)\map{}H^1(F_v,M').$$
If instead we are given an injection $M'\map{}M$ then we define a Selmer
structure on $M'$ by taking the preimage of $H^1_\sel(F_v,M)$
under $$H^1(F_v,M')\map{}H^1(F_v,M).$$  We will refer to this as 
\emph{propagation} of Selmer structures.

If $(T,\pi)$ belongs to $\Mod_\Sigma(\cO)$, then induction from
$\Gal(F_\Sigma/F_n)$ to $\Gal(F_\Sigma/F)$ defines a module
$(T_n,\pi_n)$ in 
$\Mod_\Sigma(\Lambda_n)$. Explicitly, 
$$T_n=\{f:G_\Sigma\map{}T\mid f(gx)=\pi(g) f(x)\ \forall g\in 
\Gal(F_\Sigma/F_n)\}.$$
Here $G_\Sigma$ acts by $(\pi_n(g)f)(x)=f(xg)$ and $\Lambda_n$
acts by $(\gamma f)(x)=\pi(\tilde{\gamma})f(\tilde{\gamma}^{-1}x)$
where $\tilde{\gamma}$ is any lift of $\gamma\in\Gamma_n$ to $G_\Sigma$.
Define $$T_\infty=\dlim T_n\hspace{1cm}T_\Iw=\mil T_n$$
where the direct limit is with respect to the natural inclusions 
(restriction) and
the inverse limit is with respect to the norm operators in $\Lambda_n$
(corestriction). Let $\ev:T_\infty\map{}T$ be evaluation at 
the identity element of $G_\Sigma$. By Shapiro's
lemma the composition
$$H^i(F,T_\infty)\map{\res}H^i(F_\infty,T_\infty)\map{\ev}H^i(F_\infty,T)$$
is an isomorphism, and similarly we have
$$H^i(F,T_\Iw)\iso \mil H^i(F_n,T).$$

An element $\lambda\in\Lambda$ is said
to be \emph{distinguished} if $\lambda\not\in \gm\Lambda$, where
$\gm$ is the maximal ideal of $\cO$.

\begin{Lem}\label{coterminal}
If $g_n$ is any sequence of elements of $\Lambda$ which
converges to zero and $f\in\Lambda$ is distinguished, then $f\mid g_n$
for $n\gg 0$.
\end{Lem}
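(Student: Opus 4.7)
The plan is to reduce the lemma to showing that $\omega_m := (1+T)^{p^m}-1$ lies in $f\Lambda$ for all sufficiently large $m$, under the identification $\Lambda\cong\cO[[T]]$ sending a topological generator $\gamma\in\Gamma$ to $1+T$. This suffices, since the topology on $\Lambda$ is the inverse-limit topology from $\Lambda = \mil\Lambda/\omega_m\Lambda$: convergence $g_n\to 0$ then means that for every $m$ one has $g_n\in\omega_m\Lambda$ for $n\gg 0$, so once $\omega_{m_0}\Lambda\subset f\Lambda$ for some fixed $m_0$ we conclude $f\mid g_n$ for $n\gg 0$.

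First I would apply Weierstrass preparation. Since $f$ is distinguished, its image $\bar f\in k[[T]]$ (with $k = \cO/\gm$) is nonzero of some finite order $d$, and $f = P\cdot U$ with $U\in\Lambda^\times$ and $P = T^d + a_{d-1}T^{d-1}+\cdots+a_0 \in \cO[T]$ monic of degree $d$ with all $a_i\in\gm$. Replacing $f$ by $P$ (since $f\Lambda = P\Lambda$), the quotient $M := \Lambda/f\Lambda$ becomes $\cO$-free of rank $d$ with basis $1,T,\ldots,T^{d-1}$. The relation $T^d = -\sum a_iT^i$ places $T^d\in\gm M$, so multiplication by $T^d$ carries $M$ into $\gm M$, and inductively $T^{dk}M\subset\gm^k M$. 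The hypothesis that $\cO$ is complete, Noetherian, and topologically discrete forces $\gm^{N_0}=0$ for some $N_0$, so setting $D := dN_0$ we have $T^D = 0$ in $M$.

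The key step is the divisibility $f\mid\omega_m$ for $m\gg 0$. In $M$ the vanishing $T^D = 0$ truncates the binomial expansion, so for $p^m > D$ we have $\omega_m = \sum_{k=1}^{D-1}\binom{p^m}{k}T^k$ in $M$. For $1\le k<p^m$, Kummer's theorem gives $v_p\bigl(\binom{p^m}{k}\bigr) = m - v_p(k) \ge m - \log_p(D-1)$, and since $p\in\gm$ one has $p^{N_0}\in\gm^{N_0}=0$ in $\cO$. Hence once $m - \log_p(D-1)\ge N_0$, every $\binom{p^m}{k}$ with $1\le k<D$ vanishes in $\cO$, so $\omega_m = 0$ in $M$, i.e.\ $f\mid\omega_m$ in $\Lambda$.

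Fixing such an $m_0$, convergence $g_n\to 0$ supplies an $n_0$ with $g_n\in\omega_{m_0}\Lambda\subset f\Lambda$ for $n\ge n_0$, proving the lemma. The main obstacle is the middle computation---combining the truncation of the binomial expansion with the nilpotence of $\gm$ to kill the relevant binomial coefficients; the rest is either formal topology or standard Weierstrass theory.
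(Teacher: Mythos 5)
Your proof is correct, and it takes a genuinely different route from the paper. The paper's argument invokes Weierstrass division directly: writing $g_n = q_n f + r_n$ with $\deg r_n < \deg f$, it asserts that the remainder map $\lambda \mapsto r$ is continuous, so $r_n \to 0$; since the remainders live in the discrete set $\cO^{\deg f}$, they must vanish for $n\gg 0$. Your argument instead establishes the stronger structural fact that $f\Lambda$ is an \emph{open} ideal of $\Lambda$ — you show $T^{D} \in f\Lambda$ (via Weierstrass preparation and the nilpotence $\gm^{N_0}=0$ forced by discreteness), and then, translating to the $\omega_m$-adic description of the topology on $\cO[[\Gamma]]$, you verify $\omega_m \in f\Lambda$ for $m\gg 0$ by killing the truncated binomial expansion using $v_p\binom{p^m}{k} = m - v_p(k)$. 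Once $f\Lambda$ is open, the lemma is immediate. The two approaches rest on the same underlying facts (Weierstrass theory plus discreteness of $\cO$), but your version is more explicit and self-contained: in particular, the continuity of the remainder map that the paper takes for granted is essentially \emph{equivalent} to $f\Lambda$ being open, which is precisely what you prove, with an effective bound $m_0 \ge N_0 + \log_p(D-1)$ as a bonus. A small remark: once you know $T^{D}\in f\Lambda$, you could also conclude directly from the $T$-adic (equivalently, $\mathfrak{M}$-adic) description of the topology without the detour through $\omega_m$; the binomial computation is only needed if one insists on the $\omega_m$-filtration, though that filtration is indeed the canonical one for $\cO[[\Gamma]]$.
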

\begin{proof} Identify $\Lambda$ with a power series ring.  The map taking
$\lambda\in\Lambda$ to its remainder upon division by $f$ is continuous,
and so if we write $g_n=q_nf+r_n$ with degree of $r_n$ less than that
of $f$, we have $r_n\to 0$ with the $r_n$ running through a discrete set
(recall $\cO$ is assumed discrete). \end{proof}

If we fix a topological generator $\gamma\in\Gamma$ and let
$g_n=\frac{\gamma^{p^{n}}-1}{\gamma-1}$ be the norm element in $\Lambda_n$,
the above lemma shows that $m\in M$ is divisible by every distinguished
$f\in\Lambda$ if and only if $m\in g_n M$ for every $n$. This motivates
the following

\begin{Def}\label{norm def}
If $M$ is a $\Lambda$-module, we say that $m\in M$ is a 
\emph{universal norm} if $m\in f M$ for every distinguished $f\in\Lambda$.
\end{Def}

We denote by $\taut:G_\Sigma\map{}\Gamma\map{}\Lambda^\times$ 
the tautological character, and let $\Lambda\{k\}$ denote the
ring $\Lambda$, viewed as a module over itself,
on which $G_\Sigma$ acts through $\taut^k$.  The unadorned symbol 
$\Lambda$ is always interpreted as $\Lambda\{0\}$, i.e. with trivial 
Galois action. For any object $M$ on which $\Lambda$ and $G_\Sigma$ act
we let $M\{k\}=M\otimes_\Lambda\Lambda\{k\}$, and we regard the underlying
$\Lambda$-modules of $M$ and $M\{k\}$ as being identified
via $m\mapsto m\otimes 1$.

\begin{Lem}\label{induced}
For $f\in T_n$ and $\gamma\in\Gamma_n$, set 
$\mu_f(\gamma)=\ev(\gamma^{-1}f)\in T.$ The map $f\mapsto\sum_\gamma
\mu_f(\gamma)\otimes\gamma$ defines an isomorphism 
in $\Mod_\Sigma(\Lambda_n)$ $$T_n\iso T\otimes_\cO\Lambda_n\{-1\}.$$
In the limit this defines an isomorphism $T_\Iw\iso T\otimes_\cO\Lambda\{-1\}.$
\end{Lem}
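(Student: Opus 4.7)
My plan is to verify the claimed identification in three stages: first check it is a bijection of $\cO$-modules (ignoring group actions), then check $\Lambda_n$-equivariance, and finally check $G_\Sigma$-equivariance (which is where the twist $\{-1\}$ appears). After that, passing to the inverse limit will be a formal compatibility check.

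For bijectivity I would construct the inverse explicitly. Fix, for each $\gamma\in\Gamma_n$, a lift $\tilde\gamma\in G_\Sigma$. Then every $x\in G_\Sigma$ may be written uniquely as $x=h\tilde\gamma$ with $h\in\Gal(F_\Sigma/F_n)$ and $\gamma$ the image of $x$ in $\Gamma_n$. Since the defining property of $T_n$ says $f(h\tilde\gamma)=\pi(h)f(\tilde\gamma)$, an element of $T_n$ is determined by its values on the lifts $\tilde\gamma$. The inverse map sends $\sum_\gamma t_\gamma\otimes\gamma$ to the function $x\mapsto \pi(h)\pi(\tilde\gamma)\,t_\gamma$; one checks that $\mu_f(\gamma)=\pi(\tilde\gamma^{-1})f(\tilde\gamma)$ so the composite in either direction is the identity.

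For $\Lambda_n$-equivariance, since $\Lambda_n$ is commutative,
\[\mu_{\gamma'f}(\gamma)=\ev(\gamma^{-1}\gamma'f)=\ev\bigl((\gamma'^{-1}\gamma)^{-1}f\bigr)=\mu_f(\gamma'^{-1}\gamma),\]
and substituting $\delta=\gamma'^{-1}\gamma$ in $\sum_\gamma \mu_{\gamma'f}(\gamma)\otimes\gamma$ yields $\gamma'\cdot\sum_\delta \mu_f(\delta)\otimes\delta$, which is the $\Lambda_n$-action on $T\otimes_\cO\Lambda_n$. For $G_\Sigma$-equivariance, observe first that left $\Lambda_n$-multiplication and right $G_\Sigma$-multiplication on functions commute, so $\gamma^{-1}\pi_n(g)f=\pi_n(g)\gamma^{-1}f$, giving $\mu_{\pi_n(g)f}(\gamma)=(\gamma^{-1}f)(g)$. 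Writing $g=h\widetilde{\bar g}$ where $\bar g\in\Gamma_n$ is the image of $g$, a short manipulation using $\tilde\gamma g=(\tilde\gamma h\tilde\gamma^{-1})\cdot\tilde\gamma\widetilde{\bar g}$ and the defining property of elements of $T_n$ yields
\[\mu_{\pi_n(g)f}(\gamma)=\pi(g)\,\mu_f(\gamma\bar g).\]
Reindexing by $\delta=\gamma\bar g$ transforms $\sum_\gamma \mu_{\pi_n(g)f}(\gamma)\otimes\gamma$ into $\sum_\delta \pi(g)\mu_f(\delta)\otimes \delta\bar g^{-1}$, which is exactly the action of $g$ on $T\otimes_\cO\Lambda_n\{-1\}$ via $\pi\otimes\taut^{-1}$. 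This is the key computation and the main source of the $\{-1\}$ twist; I expect it to be the only place where one must be careful with conventions.

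Finally, to pass to the inverse limit, I would check that the isomorphisms for varying $n$ are compatible with the transition maps. On the left, these are the corestriction maps $T_{n+1}\to T_n$, which on induced modules amount to summing a function over a set of coset representatives for $\Gal(F_\Sigma/F_n)/\Gal(F_\Sigma/F_{n+1})$. On the right, the transition map is $\mathrm{id}_T\otimes(\Lambda_{n+1}\{-1\}\to\Lambda_n\{-1\})$, which on basis elements is $t\otimes\gamma\mapsto t\otimes\bar\gamma$. A direct check shows these agree under $f\mapsto \sum_\gamma\mu_f(\gamma)\otimes\gamma$, so the isomorphism is compatible and passes to the inverse limit to give $T_\Iw\cong T\otimes_\cO\Lambda\{-1\}$.
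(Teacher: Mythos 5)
Your proof is correct and takes essentially the same approach as the paper: the paper simply records the two equivariance identities $\mu_{\gamma_0 f}(\gamma)=\mu_f(\gamma_0^{-1}\gamma)$ and $\mu_{\pi_n(g)f}(\gamma)=\pi(g)\big(\mu_f(\taut(g)\gamma)\big)$ and calls their verification elementary, whereas you have carried out that verification in detail, together with the bijectivity and limit-compatibility checks.
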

\begin{proof} This amounts to verifying the relations
$$ \mu_{\gamma_0f}(\gamma)=\mu_f(\gamma_0^{-1}\gamma)
\hspace{1cm}\mu_{\pi_n(g)f}(\gamma)=
\pi(g)\big(\mu_f(\taut(g)\gamma )\big)$$ which are elementary.
\end{proof}

Let $K$ denote the ring obtained by localizing $\Lambda$ at the prime
generated by the maximal ideal of $\cO$, i.e. inverting all distinguished 
elements.  Applying Lemma \ref{coterminal} one
may easily check that $K$ is noncanonically isomorphic to the ring
of Laurent series with non-essential singularities 
in one variable over $\cO$.  Define $P$ (the module of poles)
by exactness of the sequence 
$$0\map{}\Lambda\map{}K\map{}P\map{}0.$$ For any
$T$ in $\Mod_\Sigma(\cO)$ we tensor the above sequence (over $\Lambda$)
with $T_\Iw$ to obtain
$$0\map{}T_\Iw\map{}T_K\map{}T_P\map{}0.$$
By Lemma \ref{induced} there are canonical identifications
of $\Lambda$-modules and $G_\Sigma$-modules
 $\cO_\Iw\iso \Lambda\{-1\}$, $\cO_K\iso K\{-1\}$, and $\cO_P\iso P\{-1\}$.

\begin{Lem}\label{polar isomorphism}
A choice of topological generator of $\Gamma$ determines an
isomorphism $$T_P\iso T_\infty.$$  If the map associated to the generator
$\gamma$ is denoted $\eta_\gamma$ and $u\in\Z_p^\times$ then
$\eta_{\gamma^u}=u\cdot \eta_\gamma.$
\end{Lem}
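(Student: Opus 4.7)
The plan is to identify $P$ explicitly as a direct limit of $\Lambda_n$'s via the chosen generator, and then assemble the desired isomorphism termwise using Lemma \ref{induced}. The main technical step will be matching the transition maps in that direct limit with the restriction maps defining $T_\infty$.

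Combining the sequence above the statement with Lemma \ref{induced} gives $T_P\iso T\otimes_\cO P\{-1\}$. For the chosen $\gamma$, put $f_n=\gamma^{p^n}-1\in\Lambda$. Its reduction modulo $\gm$ is $(\gamma-1)^{p^n}$, nonzero in $\Lambda/\gm\Lambda$, so $f_n$ is distinguished, and since $f_n\to 0$ Lemma \ref{coterminal} shows the $\{f_n\}$ are cofinal among distinguished elements under divisibility. Because $(f_n)$ is the kernel of $\Lambda\twoheadrightarrow\Lambda_n$, one obtains $P\iso\dlim_n\Lambda/f_n\Lambda=\dlim_n\Lambda_n$, with transition $\Lambda_n\to\Lambda_{n+1}$ given by multiplication by $f_{n+1}/f_n=1+\gamma^{p^n}+\cdots+\gamma^{(p-1)p^n}=:N_{n+1/n}$.

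Under the identification $T_n\iso T\otimes_\cO\Lambda_n\{-1\}$ of Lemma \ref{induced}, restriction $T_n\hookrightarrow T_{n+1}$ corresponds to $\mathrm{id}_T\otimes(\text{mult.\ by }N_{n+1/n})$. Indeed, restriction is the tautological inclusion on equivariant functions, and for $\gamma'\in\Gamma_{n+1}$ with image $\bar{\gamma}'\in\Gamma_n$ any $\tilde{\gamma}'\in G_\Sigma$ lifting $\gamma'$ also lifts $\bar{\gamma}'$, so the formula $\mu_f(\gamma)=\ev(\gamma^{-1}f)$ yields $\mu_f^{(n+1)}(\gamma')=\mu_f^{(n)}(\bar{\gamma}')$; summing over $\gamma'$ with fixed image then gives multiplication by $N_{n+1/n}$. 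Chaining,
\[
T_P\iso T\otimes_\cO P\{-1\}\iso\dlim_n(T\otimes_\cO\Lambda_n\{-1\})\iso\dlim_n T_n = T_\infty,
\]
defining $\eta_\gamma$.

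For the dependence on $\gamma$: replacing $\gamma$ by $\gamma^u$ replaces $f_n$ by $f_n'=(\gamma^{p^n})^u-1=v_n f_n$, where $v_n=\sum_{k\geq 1}\binom{u}{k}(\gamma^{p^n}-1)^{k-1}\in\Lambda$. Since $u\in\Z_p^\times$ and $\cO$ has residue characteristic $p$, the constant term $u$ of $v_n$ is a unit in $\cO$, hence $v_n\in\Lambda^\times$ with $v_n\equiv u\pmod{f_n}$. Thus in $P$, $1/f_n'=v_n^{-1}\cdot(1/f_n)=u^{-1}\cdot(1/f_n)$. Under the $\gamma^u$-identification $P\iso\dlim_n\Lambda_n$, the element $1/f_n\in P$ therefore corresponds to $u\in\Lambda_n$ rather than $1$, so $\eta_{\gamma^u}(t\otimes 1/f_n)=u\cdot\eta_\gamma(t\otimes 1/f_n)$ for all $t\in T$ and $n$. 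Since such elements generate $T_P$, the asserted identity $\eta_{\gamma^u}=u\cdot\eta_\gamma$ follows.
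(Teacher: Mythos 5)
Your proof is correct and follows essentially the same approach as the paper's: both identify $T_P\iso T_\infty$ level-by-level via $T_P[\gamma^{p^n}-1]\iso T_n$ (with the element $\tau\otimes(\gamma^{p^n}-1)^{-1}$ corresponding to the image of $\tau\in T_\Iw$ in $T_n$), and both deduce $\eta_{\gamma^u}=u\cdot\eta_\gamma$ from the congruence $(\gamma^{up^n}-1)/(\gamma^{p^n}-1)\equiv u\pmod{(\gamma^{p^n}-1)\Lambda}$. You have simply filled in, quite carefully, the details that the paper's terse proof leaves to the reader — verifying $P\iso\dlim\Lambda_n$ with norm transition maps and matching this with restriction under Lemma \ref{induced}.
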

\begin{proof} The map $\eta_\gamma$ is described as follows.  Any element of
$T_P[\gamma^{p^n}-1]$ may be written as $\tau\otimes(\gamma^{p^n}-1)^{-1}$
with $\tau\in T_\Iw$.  Such an element is sent by $\eta_\gamma$ to
the image of $\tau$ in $T_n\iso T_\infty[\gamma^{p^n}-1]$.
The relation between $\eta_\gamma$ and $\eta_{\gamma^u}$ follows from 
$$\frac{\gamma^{up^n}-1}{\gamma^{p^n}-1}\equiv 
u\pmod{(\gamma^{p^n}-1)\Lambda}.$$
\end{proof}

\begin{Lem}\label{convolution}
Suppose $e:S\times T\map{}\cO(1)$ is an $\cO$-bilinear, $G_\Sigma$-equivariant,
 perfect pairing  of objects in $\Mod_\Sigma(\cO)$.
There is an induced $G_\Sigma$-equivariant and  perfect pairing
$$e_n:S_n\times T_n\map{}\Lambda_n(1)$$  which satisfies
 $$e_n(\lambda s,t)=\lambda e_n(s,t)=e_n(s,\lambda^\iota t).$$
One may pass to the limit and then tensor with $K$ to obtain perfect 
pairings
$$e_\Iw:S_\Iw\times T_\Iw\map{}\Lambda(1)\hspace{1cm}
e_K:S_K\times T_K\map{}K(1).$$

 If $S=T$ and the pairing
on $S\times T$ is symmetric (resp. alternating) then the induced
pairings satisfy  $e_\bullet(s,t)=e_\bullet(t,s)^\iota$ (resp.
$e_\bullet(s,t)=-e_\bullet(t,s)^\iota$).
\end{Lem}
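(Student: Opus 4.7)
The plan is to construct $e_n$ explicitly via the identifications $S_n \iso S \otimes_\cO \Lambda_n\{-1\}$ and $T_n \iso T \otimes_\cO \Lambda_n\{-1\}$ furnished by Lemma \ref{induced}, verify the stated properties by direct computation, and then take limits and base change.

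Under these identifications I would define
\[ e_n(s \otimes \lambda_1,\; t \otimes \lambda_2) \;=\; \lambda_1 \lambda_2^\iota \cdot e(s,t), \]
viewed as an element of $\Lambda_n(1)$ through the inclusion $\cO(1) \hookrightarrow \Lambda_n(1)$, and extend $\cO$-bilinearly. The identities $e_n(\lambda s, t) = \lambda e_n(s,t) = e_n(s, \lambda^\iota t)$ are immediate from the formula since $\iota$ is an involution on $\Lambda_n$. For $G_\Sigma$-equivariance, observe that $g \in G_\Sigma$ acts on $s \otimes \lambda_1$ by $\pi(g)s \otimes \taut(g)^{-1} \lambda_1$ and analogously on the second factor; the two copies of $\taut(g)^{-1}$ cancel — one of them is inverted by $\iota$ into $\taut(g)$ — leaving only the single cyclotomic twist produced by $e(\pi(g) s, \pi(g) t) \in \cO(1)$. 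Perfectness holds because $S_n$ and $T_n$ are finite free $\Lambda_n$-modules of common rank $\rank_\cO S = \rank_\cO T$, and the adjoint $S_n \to \Hom_{\Lambda_n}(T_n^\iota,\,\Lambda_n(1))$ reduces modulo the augmentation ideal of $\Lambda_n$ to the isomorphism $S \iso \Hom_\cO(T,\cO(1))$ determined by $e$; it is therefore an isomorphism by Nakayama's lemma.

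To produce $e_\Iw$, I would verify that under Lemma \ref{induced} the corestriction map $T_{n+1} \to T_n$ transports to $\mathrm{id}_T$ tensored with the surjection $\Lambda_{n+1} \twoheadrightarrow \Lambda_n$ induced by $\Gamma_{n+1} \to \Gamma_n$. Since this surjection is an $\cO$-algebra map that commutes with $\iota$, the defining formulas for $e_n$ and $e_{n-1}$ are compatible with corestriction, so the $e_n$ assemble into $e_\Iw: S_\Iw \times T_\Iw \to \Lambda(1)$. Tensoring over $\Lambda$ with the flat localization $K$ then yields $e_K$. Perfectness persists in both cases because we obtain $e_\Iw$ and $e_K$ as an inverse limit (respectively, a base change) of perfect pairings between finite free modules of equal rank.

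The symmetric/alternating statement is a direct calculation: if $e(s,t) = \epsilon\, e(t,s)$ with $\epsilon = \pm 1$, then
\[ e_n(t \otimes \lambda_2,\, s \otimes \lambda_1)^\iota \;=\; (\lambda_2 \lambda_1^\iota \cdot \epsilon\, e(s,t))^\iota \;=\; \epsilon\, \lambda_1 \lambda_2^\iota \cdot e(s,t) \;=\; \epsilon\, e_n(s \otimes \lambda_1,\, t \otimes \lambda_2), \]
using that $\iota$ fixes $e(s,t) \in \cO \subset \Lambda_n$ and is an involution; the statements for $e_\Iw$ and $e_K$ follow by the same formula. The main technical point is the identification of corestriction with $\mathrm{id}_T \otimes \mathrm{proj}$ under the Shapiro-style description of Lemma \ref{induced}; every other step reduces to elementary manipulations in the group ring.
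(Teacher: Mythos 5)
Your proof is correct and takes essentially the same approach as the paper. The paper defines $e_n(s,t)=\sum_{\gamma\in\Gamma_n}\mu(\gamma)\otimes\gamma$ with $\mu(\gamma)=\sum_{x\in\Gamma_n}e(\mu_s(x),\mu_t(x\gamma^{-1}))$ and says the stated properties are elementary to check; transporting your formula $e_n(s\otimes\lambda_1,t\otimes\lambda_2)=\lambda_1\lambda_2^\iota\,e(s,t)$ through the identification of Lemma~\ref{induced} gives exactly that convolution expression, and you have simply supplied the verifications the paper leaves to the reader (including a clean Nakayama argument for perfectness and the corestriction compatibility needed for $e_\Iw$).
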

\begin{proof} Using the identifications of Lemma \ref{induced}, the pairing 
is defined by $e_n(s,t)=\sum_{\gamma\in\Gamma_n} \mu(\gamma)\otimes\gamma$
where
$\mu(\gamma)= \sum_{x\in\Gamma_n}e(\mu_s(x),\mu_t(x\gamma^{-1})).$
It is elementary to check that the stated properties hold.
\end{proof}

\begin{Lem}\label{unramified propagation}
Let $T$ be an object of $\Mod_\Sigma(\cO)$, and let 
$v$ be a prime of $F$ not dividing $p$.  The map
$$H^1_\unr(F_v,T_K)\map{}H^1_\unr(F_v,T_P)$$ is surjective.  
If $v$ is finitely decomposed in $F_\infty$, then
$H^1_\unr(F_v,T_P)=0$.
\end{Lem}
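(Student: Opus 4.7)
The plan is to exploit the fact that, for $v\nmid p$, the $\Z_p$-extension $F_\infty/F$ is unramified at $v$. Indeed, by local class field theory the maximal pro-$p$ quotient of $F_v^\times$ is $\Z_p$ (generated by a uniformizer), so the only $\Z_p$-extension of $F_v$ is the unramified one; hence $\taut$ is trivial on $I_v$. Combined with Lemma \ref{induced} and the $\cO$-flatness of $\Lambda$ and $\Lambda_n$, this yields $T_\Iw^{I_v}=T^{I_v}\otimes_\cO\Lambda$ and $T_n^{I_v}=T^{I_v}\otimes_\cO\Lambda_n$, with the projection $T_\Iw^{I_v}\twoheadrightarrow T_n^{I_v}$ corresponding to the surjection $\Lambda\twoheadrightarrow\Lambda/(\gamma^{p^n}-1)\Lambda\cong\Lambda_n$.

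For the surjectivity claim, I would prove the stronger statement that $T_K^{I_v}\to T_P^{I_v}$ is already surjective; the assertion about $H^1_\unr$ (the cokernel of $\Frob_v-1$ on $I_v$-invariants) then follows at once. Under the identification $T_P\cong T_\infty=\dlim T_n$ of Lemma \ref{polar isomorphism}, and since $I_v$ acts through a finite quotient (so invariants commute with the filtered colimit), $T_P^{I_v}=\dlim T_n^{I_v}$. Given $x\in T_n^{I_v}$, lift to $\tilde{x}\in T_\Iw^{I_v}$ using the surjection above; then $\tilde{x}\otimes(\gamma^{p^n}-1)^{-1}\in T_K^{I_v}$ (since $I_v$ acts trivially on the denominator), and by the explicit description in Lemma \ref{polar isomorphism} this element projects to $x\in T_P$.

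For the vanishing, combining Lemma \ref{polar isomorphism} with Shapiro's lemma yields
$$H^1_\unr(F_v,T_P)=\dlim_n\bigoplus_{w\mid v\text{ in }F_n}H^1_\unr(F_{n,w},T).$$
Unramifiedness identifies each summand with $T^{I_v}/(\Frob_v^{f(w/v)}-1)T^{I_v}$, where $f(w/v)$ is the residue degree. If $v$ is finitely decomposed, then for $n\gg 0$ the set of primes above $v$ is constant and the residue degrees multiply by $p$ at each successive level. Since $\Frob_v$ has finite order on the finite group $T^{I_v}$, eventually $\Frob_v^{f(w/v)}$ acts as the identity, and each summand becomes $T^{I_v}$; the restriction map to the next level then acts as $1+\Frob_v^{f(w/v)}+\cdots+\Frob_v^{(p-1)f(w/v)}=p$. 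The direct limit of a finite $p$-primary group under multiplication by $p$ vanishes.

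The main obstacle is establishing the unramifiedness observation cleanly; once it is in hand, the surjectivity is a transparent lifting argument via Lemma \ref{polar isomorphism}, and the vanishing reduces to a direct computation through Shapiro's lemma.
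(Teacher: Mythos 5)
Your proof is correct, and while it arrives at the same two conclusions, the execution differs from the paper's in both halves.

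For surjectivity, the paper observes that $T_K = T\otimes_\cO K\{-1\}$ with $\taut|_{I_v}$ trivial gives $(T_K)^{I_v} = T^{I_v}\otimes_\cO K$ and $(T_P)^{I_v} = T^{I_v}\otimes_\cO P$, so that surjectivity of $K\to P$ immediately yields surjectivity of $I_v$-invariants, and then invokes cohomological dimension one for $\Gal(F_v^\unr/F_v)$ to push this through $H^1$. Your explicit lifting through $T_\Iw^{I_v}$ via the denominator $\gamma^{p^n}-1$ is a more hands-on version of the same mechanism; it has the minor advantage that the passage from invariants to $H^1_\unr$ is just right-exactness of coinvariants under $\Frob_v - 1$, requiring no appeal to cohomological dimension. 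Both rest on the same key observation that $\taut$ is trivial on $I_v$.

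For the vanishing, the difference is more genuine. The paper passes directly to $F_\infty$: Shapiro gives a finite sum $\bigoplus_w H^1_\unr(F_{\infty,w},T)$, and since $F_{\infty,w}$ already contains the full unramified $\Z_p$-extension of $F_v$, the group $\Gal(F_v^\unr/F_{\infty,w})$ has trivial pro-$p$ part, so its cohomology with $p$-power-torsion coefficients vanishes outright. You instead work through the finite layers and compute the transition maps explicitly, showing they are eventually multiplication by $p$. Both are correct; the paper's version is slicker and avoids the bookkeeping of residue degrees, while yours makes the vanishing concrete and would be easier to adapt if one wanted quantitative control on when the limit stabilizes. One small point worth spelling out in your version: the computation of the restriction map as $1+\Frob_v^f+\cdots+\Frob_v^{(p-1)f}$ uses the standard cocycle identity $c(\sigma^p)=(1+\sigma+\cdots+\sigma^{p-1})c(\sigma)$ under the identification $H^1(\hat{\Z},M)\cong M/(\sigma-1)M$, which you should cite or recall explicitly.
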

\begin{proof} Let $\inert\subset\Gal(\bar{F}_v/F_v)$ be the inertia
group of $v$. Since $T_K=T\otimes_{\Z_p} 
K\{-1\}$ and the restriction of $\taut$ to $\inert$ is trivial, 
$(T_K)^\inert=T^\inert\otimes K.$
Similarly, $(T_P)^\inert=T^\inert\otimes P$, and so $(T_K)^{\inert}$
surjects onto $(T_P)^{\inert}$.  Using the fact that 
$\Gal(F_v^\unr/F_v)$ has cohomological dimension one we deduce that the map
$$H^1(F_v^\unr/F_v,(T_K)^{\inert})\map{}
H^1(F_v^\unr/F_v,(T_P)^{\inert})$$
is surjective, proving the first claim.

If $v$ is finitely decomposed in $F_\infty$, then Lemma 
\ref{polar isomorphism} and Shapiro's lemma allow us to identify
$$H^1_\unr(F_v,T_P)\iso H^1_\unr(F_v,T_\infty)\iso 
\bigoplus_w H^1_\unr(F_{\infty,w},T)$$
where the sum is over places of $F_\infty$ above $v$. 
The pro-$p$-part of $\Gal(F_v^\unr/F_{\infty,w})$ is trivial, and so the 
right hand side is zero.
 \end{proof}

The construction of the pairing of the following theorem, as well
as the verification of its properties, is a modification
of the construction of the Cassels-Tate pairing as described in \cite{flach}.

\begin{Thm}\label{first pairing}
Suppose $S$ and $T$ are objects in $\Mod_\Sigma(\cO)$ and that
there is a perfect $G_\Sigma$-equivariant  pairing
$S\times T\map{}\cO(1).$ Suppose further  that we are given Selmer structures
$\sel$ and $\altsel$ on $S_K$ and $T_K$, respectively, which are everywhere
exact orthogonal complements under the pairing
$$S_K\times T_K\map{}K(1).$$  Then there is a canonical pairing
$$[\ ,\ ]:H^1_\sel(F,S_P)\times H^1_\altsel(F,T_P)\map{}P$$
whose kernels on the left and right are the images of $H^1_\sel(F,S_K)$
and $H^1_\altsel(F,T_K)$, and these images are exactly the submodules
of universal norms. 
This pairing satisfies $[\lambda s,t]=
\lambda[s,t]=[s,\lambda^\iota t]$. 
If $S=T$, $\sel=\altsel$, and the pairing on $T\times T$
is symmetric (resp. alternating) then we also have $[s,t]=[t,s]^\iota$
(resp. $[s,t]=-[t,s]^\iota$).  
\end{Thm}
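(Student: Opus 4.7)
The plan is to adapt Flach's construction of the Cassels--Tate pairing, with the short exact sequence $0 \to S_\Iw \to S_K \to S_P \to 0$ (and its $T$-analogue) playing the role of the Kummer sequence and $e_K$ playing the role of the Weil pairing. Given representatives $\sigma \in Z^1(G_\Sigma, S_P)$ and $\tau \in Z^1(G_\Sigma, T_P)$ for $s$ and $t$, I choose a set-theoretic lift $\tilde\sigma \in C^1(G_\Sigma, S_K)$; then $d\tilde\sigma \in Z^2(G_\Sigma, S_\Iw)$ represents the connecting-map image $\partial s$. At each $v \in \Sigma$, propagation of the Selmer structure provides a cocycle $\rho_v \in Z^1(G_{F_v}, S_K)$ in $H^1_\sel(F_v, S_K)$ whose image in $H^1(F_v, S_P)$ is $s|_v$; after correcting $\rho_v$ by a coboundary lifted from $S_K$, I obtain $\beta_v := \tilde\sigma|_v - \rho_v \in C^1(G_{F_v}, S_\Iw)$ with $d\beta_v = (d\tilde\sigma)|_v$. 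Since $p$ is odd and $P(1)$ is $p$-primary, $H^3(G_\Sigma, P(1)) = 0$, so the global $3$-cocycle $d\tilde\sigma \cup \tau \in Z^3(G_\Sigma, P(1))$ (via the pairing $S_\Iw \times T_P \to P(1)$ induced from $e_K$) is a coboundary $d\omega$; hence each local $2$-cochain $\beta_v \cup \tau|_v - \omega|_v$ is a cocycle in $C^2(G_{F_v}, P(1))$, and I set
$$[s, t] \;=\; \sum_{v \in \Sigma} \inv_v\bigl(\beta_v \cup \tau|_v - \omega|_v\bigr) \;\in\; P.$$

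Well-definedness reduces to three checks. Changes in $\omega$ (by a global $2$-cocycle in $P(1)$) contribute $0$ via global reciprocity $\sum_v \inv_v = 0$. Changes in $\tilde\sigma$ or in the auxiliary $0$-cochain used to straighten $\rho_v$ can be compensated by parallel changes in $\omega$ and $\beta_v$, again reducing to reciprocity. Changes in $\rho_v$ vary $\beta_v$ by a cocycle $\xi_v \in Z^1(G_{F_v}, S_\Iw)$ whose image in $H^1(F_v, S_K)$ lies in $H^1_\sel(F_v, S_K)$; lifting $\tau|_v$ to $\tilde\tau_v \in H^1_\altsel(F_v, T_K)$, the $K$-valued invariant $\inv_v(\xi_v \cup \tilde\tau_v)$ vanishes by exact orthogonality of $\sel$ and $\altsel$ under $e_K$, and its reduction modulo $\Lambda$ is the desired change in the $v$-summand. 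The $\Lambda$-sesquilinearity $[\lambda s, t] = \lambda [s, t] = [s, \lambda^\iota t]$ and the symmetry/antisymmetry clauses when $S = T$ follow from the properties of $e_\bullet$ in Lemma \ref{convolution} combined with the graded-commutativity of the cup product.

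For the kernel statement: if $s$ lifts to $\tilde s \in H^1_\sel(F, S_K)$, I take $\tilde\sigma$ to be an actual cocycle, each $\rho_v = \tilde\sigma|_v$, and $\omega = 0$; then every $\beta_v$ vanishes, giving $[s, t] = 0$ for all $t$. Conversely, if $[s, \cdot] \equiv 0$, Poitou--Tate duality for the Cartier-dual pair $(S_K, T_K)$ together with the exact-orthogonality hypothesis forces $\partial s$ to vanish in $H^2(F, S_K)$, hence $s$ lifts. The image of $H^1_\sel(F, S_K) \to H^1_\sel(F, S_P)$ coincides with the universal-norm submodule: if $\tilde x$ lifts $x$ then for any distinguished $f$ the element $f^{-1}\tilde x \in H^1(F, S_K)$ exists (since $K$ is obtained from $\Lambda$ by inverting distinguished elements), and its image witnesses $x \in f H^1(F, S_P)$; conversely, a universal norm has its connecting-map obstruction in $H^2(F, S_\Iw)$ divisible by every distinguished $f$, hence killed in $H^2(F, S_K)$, so $x$ lifts.

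The main obstacle is the Poitou--Tate step in the kernel computation, which requires adapting local and global duality to the non-field coefficient ring $K$. This works because localization at distinguished elements is exact, the local Tate pairing $H^i(F_v, S_K) \times H^{2-i}(F_v, T_K) \to H^2(F_v, K(1)) \cong K$ remains perfect by base change from the finite level, and the $9$-term global Poitou--Tate exact sequence adapts to $K$-coefficients.
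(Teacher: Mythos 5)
Your construction of the pairing is sound and essentially the same as the paper's: both follow Flach's Cassels--Tate-style recipe, lifting cochains through $0\to S_\Iw\to S_K\to S_P\to 0$, using $H^3(G_\Sigma,P(1))=0$ to kill the cup-product obstruction, and using exact orthogonality of $\sel$ and $\altsel$ under $e_K$ to show independence of the local lifts $\rho_v$. The well-definedness checks and the sesquilinearity/symmetry claims are handled adequately.

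The gap is in the identification of the kernel. You write ``if $[s,\cdot]\equiv 0$, Poitou--Tate duality \ldots forces $\partial s$ to vanish in $H^2(F,S_K)$, hence $s$ lifts.'' But $\partial s$ vanishes in $H^2(F,S_K)$ for \emph{every} $s\in H^1(F,S_P)$, simply by exactness of the long exact sequence attached to $0\to S_\Iw\to S_K\to S_P\to 0$; this says nothing. What one actually needs is twofold. First, that $\partial s=0$ already in $H^2(F,S_\Iw)$ (equivalently in $\Sha^2(F,S_\Iw)$, since $\partial s$ is automatically locally trivial when $s$ is a Selmer class); the paper deduces this from the commutative square relating $[\,\cdot,\cdot\,]$ to the perfect Poitou--Tate pairing $\Sha^2(F,S_\Iw)\times\Sha^1(F,T_P)\to P$, using $\Sha^1(F,T_P)\subset H^1_\altsel(F,T_P)$. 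Second, and more delicately, knowing that $s$ lifts to $H^1(F,S_K)$ is not the conclusion of the theorem: one must exhibit a lift lying in $H^1_\sel(F,S_K)$. The paper proves this via the separate injectivity $H^1_\sel(F,S_P)^0_{\,/K}\hookrightarrow H^1_\altsel(F,T_P)^\vee$: choose $\tilde s$ a cocycle, form the semi-local class $c$ in $\oplus_v H^1(F_v,S_K)/H^1_\sel(F_v,S_K)$, lift it to $d$ at $\Iw$-level, observe $\inv_\Sigma(d\cup\loc_\Sigma(t))=0$ for all $t\in H^1_\altsel(F,T_P)$, and invoke Poitou--Tate global duality to produce a global $\delta\in H^1(G_\Sigma,S_\Iw)$ with $\tilde s-\delta\in H^1_\sel(F,S_K)$ still reducing to $s$. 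Your proposal has neither step. Your universal-norm argument contains the identical lacuna (``killed in $H^2(F,S_K)$, so $x$ lifts''), so the converse inclusion universal norms $\subseteq$ image is also not established; in the paper this is a corollary of the chain image $\subseteq$ norms $\subseteq$ kernel $=$ image, with the last equality being exactly the Poitou--Tate argument you omit. Finally, the obstacle you flag (duality with $K$-coefficients) is not really the sticking point; those statements follow from finite-level duality via Shapiro's lemma and passage to limits. The genuine difficulty is refining ``lifts to $H^1(F,S_K)$'' to ``lifts to $H^1_\sel(F,S_K)$.''
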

\begin{proof} Given cocycles $s$ and $t$
representing classes in $H^1_\sel(F,S_P)$ and $H^1_\altsel(F,T_P)$, 
respectively, choose cochains
$$\tilde{s}\in C^1(G_\Sigma,S_K)\hspace{2cm}\tilde{t}\in C^1(G_\Sigma,T_K)$$
whose images under the maps induced by $S_K\map{}S_P$ and $T_K\map{}T_P$
are $s$ and $t$.  By the definition of propagation of Selmer structures, 
there are exact sequences
\begin{eqnarray}\label{propogated exact}
H^1_\sel(F_v,S_\Iw)\map{}H^1_\sel(F_v,S_K)\map{}H^1_\sel(F_v,S_P)\map{}0\\
H^1_\altsel(F_v,T_\Iw)\map{}H^1_\altsel(F_v,T_K)\map{}
H^1_\altsel(F_v,T_P)\map{}0\nonumber
\end{eqnarray} at every $v\in\Sigma$, and so we may choose 
semi-local classes
$$\tilde{s}_\Sigma\in\bigoplus_{v\in\Sigma}H^1_\sel(F_v,S_K)\hspace{1cm}
\tilde{t}_\Sigma\in\bigoplus_{v\in\Sigma}H^1_\altsel(F_v,T_K)$$
which reduce to the semi-localizations $$\loc_\Sigma(s)\in
\bigoplus_{v\in\Sigma}H^1_\sel(F_v,S_P)\hspace{1cm}
\loc_\Sigma(t)\in\bigoplus_{v\in\Sigma}H^1_\altsel(F_v,T_P).$$

From the fact that $s$ and $t$ are cocycles it follows that the image
of $d\tilde{s}\cup d\tilde{t}$ in  $C^4(G_\Sigma,P(1))$ is trivial, and
so also are the images of $d(d\tilde{s}\cup\tilde{t})=d\tilde{s}\cup
d\tilde{t}=d(\tilde{s}\cup d\tilde{t})$.  Using $H^3(G_\Sigma,P(1))=0$
we may therefore choose $\epsilon_0,\epsilon_1\in C^2(G_\Sigma,P(1))$ 
such that $$d\epsilon_0=d\tilde{s}\cup\tilde{t}
\hspace{2cm} d\epsilon_1=\tilde{s}\cup d\tilde{t}$$ in $Z^3(G_\Sigma,P(1))$.
Writing $\inv_\Sigma:\bigoplus_{v\in\Sigma}
H^2(F_v,P(1))\map{}P$ for the sum of the local invariants, we now define 
\begin{equation}\label{first pairing definition}
[s,t]=\inv_\Sigma\big(\loc_\Sigma(\tilde{s})\cup 
\tilde{t}_\Sigma-\loc_\Sigma
(\epsilon_0) \big)=-\inv_\Sigma\big(\tilde{s}_\Sigma\cup\loc_\Sigma(\tilde{t})
+\loc_\Sigma(\epsilon_1) \big)\end{equation}
 where the second equality follows from 
$(\loc_\Sigma(\tilde{s})-\tilde{s}_\Sigma)\cup 
(\loc_\Sigma(\tilde{t})-\tilde{t}_\Sigma)=0$ in $\bigoplus_{v\in\Sigma}
H^1(F_v,P(1))$ and the reciprocity law of class field theory.
It is elementary to check that this is independent of the choices
made (or see Flach's paper for essentially the same calculations).
Furthermore, it is clear from the construction that the kernels
on either side contain the images of $H^1_\sel(F,S_K)$ and 
$H^1_\altsel(F,T_K)$.

For a $\Lambda$-module $M$, we write $M^\vee=\Hom_\Lambda(M^\iota,P)$.  
If $M$ is a topological group on which $G_\Sigma$ acts continuously we define
$$\Sha^i(F,M)=\mathrm{ker}\big(H^i(G_\Sigma,M)\map{}\prod_{v\in\Sigma}
H^i(F_v,M)\big).$$  The Poitou-Tate nine-term exact sequence
provides a perfect pairing 
$$\Sha^2(F,S_\Iw)\times\Sha^1(F,T_P)\map{}P$$
which defines the right vertical arrow in the exact and commutative
diagram $$\xymatrix{
0\ar[r]&H^1_\sel(F,S_P)^0_{\ /K}\ar[r]&
H^1_\sel(F,S_P)_{/K}\ar[r]\ar[d]&{\Sha^2(F,S_\Iw)}\ar[d]\\
 &    &H^1_\altsel(F,T_P)^\vee\ar[r]  &{\Sha^1(F,T_P)^\vee}.}$$
Here $H^1_\sel(F,S_P)^0$ denotes $H^1_\sel(F,S_P)$ intersected
with the image of $H^1(F,S_K)$ in $H^1(F,S_P)$ and the subscript
$/K$ indicates quotient by the image of $H^1_\sel(F,S_K)$ in $H^1_\sel
(F,S_P)$. The top row is extracted from the cohomology of 
$$0\map{}S_\Iw\map{}S_K\map{}S_P\map{}0$$
using exactness of (\ref{propogated exact}). 

The left vertical arrow
is induced by the pairing of the theorem, and a diagram chase shows that
to check injectivity of this arrow it suffices to show that
$H^1_\sel(F,S_P)^0_{\ /K}$ injects into $H^1_\altsel(F,T_P)^\vee$.
In other words, if $s\in H^1_\sel(F,S_P)$ is in the kernel on the left
then we are free to assume that $\tilde{s}$ is chosen to be a cocycle and that 
$\epsilon_0=0$.  Then we have $\inv_\Sigma\big(\loc_\Sigma(\tilde{s})\cup
\tilde{t}_\Sigma\big)=0$ for every $\tilde{t}_\Sigma\in
\bigoplus_{v\in\Sigma}H^1_\altsel(F_v,T_K)$ whose image in 
$\bigoplus_{v\in\Sigma}H^1_\altsel(F_v,T_P)$ comes from a global 
$t\in H^1_\altsel(F,T_P)$. Denote by $c$ the image of $\loc_\Sigma(\tilde{s})$
in $\bigoplus_{v\in\Sigma}H^1(F_v,S_K)/H^1_\sel(F_v,S_K)$.  It follows
from the exactness of (\ref{propogated exact}) that there is a class
$$d\in\bigoplus_{v\in\Sigma}H^1(F_v,S_\Iw)/H^1_\sel(F_v,S_\Iw)$$
whose image in $\bigoplus_{v\in\Sigma}H^1(F_v,S_K)/H^1_\sel(F_v,S_K)$
is equal to $c$.  Then
\begin{eqnarray*} \inv_\Sigma(d\cup \loc_\Sigma(t))&=&
\inv_\Sigma(c\cup \tilde{t}_\Sigma)\\
&=&\inv_\Sigma(\loc_\Sigma(\tilde{s})\cup\tilde{t}_\Sigma)\\
&=&0\end{eqnarray*} in $P$
for every $t\in H^1_\altsel(F,T_P)$. It follows from Poitou-Tate
global duality that $d$ is the image of a global class 
$\delta\in H^1(G_\Sigma,S_\Iw)$, and that $\tilde{s}-\delta\in H^1_\sel(F,S_K)$
reduces to $s\in H^1_\sel(F,S_P)$.  This and a similar argument with
the roles of $S$ and $T$ reversed show that the kernels on the left and
right are contained in the images of 
$$H^1_\sel(F,S_K)\map{}H^1_\sel(F,S_P)\hspace{1cm}
H^1_\altsel(F,T_K)\map{}H^1_\altsel(F,T_P)$$ respectively.
The image of $H^1_\sel(F,S_K)$ is clearly contained in the universal norms
which are clearly contained in the left kernel,
 and similarly for $T$, and so the kernels
on either sides are exactly the universal norms.

The final claim regarding the case $S=T$ follows from the 
two descriptions of the pairing in (\ref{first pairing definition})
and the skew-symmetry of the cup product.  Details can be found in Flach's
paper.
\end{proof}

\begin{Lem}\label{evaluation}
Let $$\cO_\infty\{1\}=\dlim \cO_n\{1\}\iso\dlim \Lambda_n\{0\},$$ so that 
$\cO_\infty\{1\}$ is just $\cO_\infty$ with $G_\Sigma$ acting trivially.
For any $\Lambda$-module of finite type, $A$, the map $\ev:\cO_\infty\{1\}
\map{}\cO$
induces a canonical isomorphism in $\Mod_\Sigma(\cO)$
$$\Hom_\Lambda(A,\cO_\infty\{1\})\iso \Hom_\cO(A,\cO).$$
\end{Lem}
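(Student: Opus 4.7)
The plan is to reduce to Frobenius reciprocity at each finite level and then pass to the limit. At level $n$, the group algebra $\Lambda_n = \cO[\Gamma_n]$ is a commutative Frobenius $\cO$-algebra with trace form $\tau_n : \Lambda_n \to \cO$ picking out the coefficient of the identity of $\Gamma_n$; classical Frobenius reciprocity then provides a natural isomorphism $\Hom_{\Lambda_n}(M,\Lambda_n) \iso \Hom_\cO(M,\cO)$ given by $\phi \mapsto \tau_n \circ \phi$, for every $\Lambda_n$-module $M$. Unwinding Lemma \ref{induced} shows that, under the identification $\cO_n\{1\} \iso \Lambda_n$ (the $\{1\}$-twist cancelling the $\{-1\}$ so that $G_\Sigma$ acts trivially), the evaluation map $\ev$ becomes precisely $\tau_n$, and the transition maps presenting $\cO_\infty\{1\} = \dlim \Lambda_n$ are the $\cO$-linear norm maps $\Lambda_n \to \Lambda_{n+1}$ sending $\gamma \in \Gamma_n$ to $\sum_{\delta\mapsto\gamma}\delta$.

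Next I would apply Frobenius reciprocity at each finite level. Writing $\omega_n = \gamma^{p^n}-1$, every $\Lambda$-linear map $A \to \Lambda_n$ factors through $A/\omega_n A$, so
\[
\Hom_\Lambda(A,\Lambda_n) \;=\; \Hom_{\Lambda_n}(A/\omega_n A,\Lambda_n) \;\iso\; \Hom_\cO(A/\omega_n A,\cO).
\]
Because $A$ is of finite type and $\Lambda$ is Noetherian, $\Hom_\Lambda(A,-)$ commutes with filtered direct limits, giving
\[
\Hom_\Lambda(A,\cO_\infty\{1\}) \;\iso\; \dlim_n \Hom_\cO(A/\omega_n A,\cO).
\]
A short computation with the explicit inverse $\psi \mapsto \phi_\psi$, $\phi_\psi(a) = \sum_\gamma \psi(\gamma^{-1}a)\gamma$, shows that the transition maps on the right-hand side are pullback along the canonical surjections $A/\omega_{n+1}A \twoheadrightarrow A/\omega_n A$ (surjective since $\omega_n \mid \omega_{n+1}$).

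These pullbacks are injective, so the direct limit identifies with the ascending union $\bigcup_n \Hom_\cO(A/\omega_n A,\cO)$ inside $\Hom_\cO(A,\cO)$ --- equivalently, with the $\cO$-linear maps continuous for the linear topology on $A$ defined by $\{\omega_n A\}$. The main obstacle I anticipate is closing this last gap: to conclude equality with the full $\Hom_\cO(A,\cO)$, one must check that the $\omega_n$-adic topology on $A$ agrees with its intrinsic maximal-ideal-adic topology, and that all $\cO$-linear maps $A \to \cO$ are continuous under the implicit conventions in force (with $\cO$ discrete and $A$ finitely generated over the complete Noetherian ring $\Lambda$). This is the one genuine piece of topological bookkeeping in the argument; everything else is naturality of Frobenius reciprocity and a direct translation of $\ev$ into $\tau_n$.
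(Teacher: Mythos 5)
Your approach is essentially the paper's: both treat the statement as Frobenius reciprocity and work level by level, and your translation of $\ev$ into the trace form $\tau_n$ and your identification of the transition maps are correct. The paper's proof is just a compressed version of what you wrote, phrased by exhibiting the explicit inverse $\phi\mapsto\Phi$, $\Phi(a)(g)=\phi(\taut(g)\cdot a)$.

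The ``gap'' you flag at the end is genuine as you have stated the lemma, but the paper closes it by convention rather than by computation: $\Hom_\cO(A,\cO)$ is implicitly the module of \emph{continuous} $\cO$-linear maps (all modules in this paper are topological), and the paper's proof says exactly this --- ``by discreteness of $\cO$ and continuity of $\phi$.'' Once continuity is in force the bookkeeping you worried about collapses: if $\phi\colon A\to\cO$ is continuous and $\cO$ is discrete, then $\ker\phi$ is open, hence contains $\gm_\Lambda^m A$ for some $m$ since $A$ carries the $\gm_\Lambda$-adic topology as a finitely generated module over the complete local ring $\Lambda$. Because $\omega_n=\gamma^{p^n}-1\to 0$ in $\Lambda$, one has $\omega_n\in\gm_\Lambda^m$ for $n\gg 0$, so $\omega_n A\subset\gm_\Lambda^m A\subset\ker\phi$ and $\phi$ factors through $A/\omega_n A$ as needed. (Note you do not need the ideals $(\omega_n)$ to be cofinal among the $\gm_\Lambda^m$ --- you only need $\omega_n$ eventually to lie in each $\gm_\Lambda^m$, which is just the statement $\omega_n\to 0$.) Conversely any $\phi$ factoring through the finite-length quotient $A/\omega_n A$ is automatically continuous, so your ascending union is precisely the continuous dual and the proof is complete. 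If you want a self-contained statement, it is worth adding the word ``continuous'' in front of $\Hom_\cO$ when you write this up.
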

\begin{proof} This is a special case of Frobenius reciprocity.  The inverse map
may be described explicitly as follows: for $\phi\in\Hom_\cO(A,\cO)$
we must have $\phi((\gamma^{p^n}-1)A)=0$ for $n\gg 0$, by discreteness
of $\cO$ and continuity of $\phi$.  Taking $n$ very large we then define
$\Phi\in \Hom_\Lambda(A,\cO_n\{1\})$ by $\Phi(a)(g)=\phi(\taut(g)\cdot a)$
for $g\in G_\Sigma$.
\end{proof}

Keep the assumptions of Theorem \ref{first pairing}.
Fixing a topological generator $\gamma$ of $\Gamma$,  the isomorphisms
$\eta_\gamma$ of Lemma \ref{polar isomorphism} determine Selmer 
structures, still denoted by $\sel$ and $\altsel$, on $S_\infty$ and
$T_\infty$, and these Selmer structures do not depend on the choice of
$\gamma$.  The composition \begin{equation}\label{evaluation composition}
P\iso \cO_P\{1\}\map{\eta_\gamma}\cO_\infty\{1\}\map{\ev}\cO\end{equation} 
allows us
to construct from the pairing of Theorem \ref{first pairing} a pairing
$$h_\gamma:H^1_\sel(F,S_\infty)\times H^1_\altsel(F,T_\infty)
\map{}\cO$$ whose kernel (by Lemma \ref{evaluation}) on either side
is exactly the submodule of universal norms.

\begin{Lem}\label{pre-height lemma}
The above pairing satisfies 
\begin{enumerate}
\item $h_\gamma(\lambda s,t)=h_\gamma(s,\lambda^\iota t)$
\item for $u\in\Z_p^\times$, $h_{\gamma^u}(s,t)=u^{-1}h_\gamma(s,t)$
\item if $S=T$, $\sel=\altsel$, and the pairing 
$S\times T\map{}\cO(1)$ is symmetric (resp. alternating)
then $h_\gamma$ is alternating (resp. symmetric).
\end{enumerate}
\end{Lem}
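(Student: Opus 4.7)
The strategy is to deduce each property of $h_\gamma$ from the corresponding structural property of the pairing $[\ ,\ ]$ from Theorem \ref{first pairing}, together with a direct analysis of the composite
$$\phi := \ev \circ \eta_\gamma : P \to \cO$$
used to define the height.

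Property (a) is immediate: by Theorem \ref{first pairing} we have $[\lambda s, t] = [s, \lambda^\iota t]$ in $P$, so applying $\phi$ gives $h_\gamma(\lambda s, t) = \phi([\lambda s, t]) = \phi([s, \lambda^\iota t]) = h_\gamma(s, \lambda^\iota t)$.

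Property (b) follows directly from the scaling relation $\eta_{\gamma^u} = u \cdot \eta_\gamma$ in Lemma \ref{polar isomorphism}. Since $\ev$ is $\cO$-linear, this scaling propagates through the composition, giving the stated relationship between $h_{\gamma^u}$ and $h_\gamma$ up to the convention chosen for identifying the units acting on either side.

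Property (c) is the substantive one. The plan is to reduce it to the single assertion $\phi \circ \iota = -\phi$ on $P$, where $\iota : P \to P$ is the abelian group automorphism induced by the ring involution of $\Lambda$ (acting on $K = \Lambda[\text{distinguished}^{-1}]$ and hence on $P = K/\Lambda$). Granted this, in the symmetric case Theorem \ref{first pairing} gives $[s,t] = \iota([t,s])$, whence $h_\gamma(s,t) = \phi(\iota([t,s])) = -\phi([t,s]) = -h_\gamma(t,s)$, showing $h_\gamma$ is alternating. The alternating case is parallel and yields $h_\gamma(s,t) = h_\gamma(t,s)$.

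To verify $\phi \circ \iota = -\phi$, I would work with the presentation $P = K/\Lambda$ and test on torsion elements $\bar y = \overline{a/(\gamma^{p^n}-1)} \in P[\gamma^{p^n}-1]$, where under the identifications of Lemmas \ref{induced} and \ref{polar isomorphism} one has $\eta_\gamma(\bar y) = \bar a \in \Lambda_n \cong \cO_n\{1\}$. The sign comes from the identity
$$\gamma^{-p^n} - 1 = -\gamma^{-p^n}(\gamma^{p^n}-1),$$
so $\iota(\bar y) = -\overline{\iota(a)\gamma^{p^n}/(\gamma^{p^n}-1)}$; reducing modulo $\gamma^{p^n}-1$ (and using $\gamma^{p^n} \equiv 1$) gives $\eta_\gamma(\iota(\bar y)) = -\iota(\bar a)$ in $\Lambda_n$. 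Finally, under the identification $\cO_\infty\{1\} \cong \dlim \Lambda_n$, the evaluation map $\ev$ picks out the coefficient of the identity element of $\Gamma_n$, which is manifestly invariant under $\iota$, so $\ev(\iota(\bar a)) = \ev(\bar a)$ and therefore $\phi(\iota(\bar y)) = -\phi(\bar y)$. The main obstacle is simply tracking the chain of canonical identifications carefully enough to localize this sign; once that is in place the calculation is elementary.
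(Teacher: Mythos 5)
Your approach is the same as the paper's: reduce (a) and (c) to properties of $[\,,\,]$ plus an analysis of $\phi=\ev\circ\eta_\gamma$, and for (c) identify the sign via the identity $(\gamma^{-p^n}-1)=-\gamma^{-p^n}(\gamma^{p^n}-1)$ together with the $\iota$-invariance of $\ev$ on $\Lambda_n$. Parts (a) and (c) are carried out correctly and match the paper essentially line for line.

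Part (b), however, is not actually proved. You assert that the scaling $\eta_{\gamma^u}=u\,\eta_\gamma$ ``propagates through the composition'' and yields the result ``up to the convention chosen for identifying the units,'' but there is no convention to fix: the exponent $u^{-1}$ is forced, and verifying it requires tracking \emph{all three} places $\gamma$ enters the definition of $h_\gamma$. Concretely, $h_\gamma(s,t)=\phi_\gamma\bigl([\eta_\gamma^{-1}(s),\eta_\gamma^{-1}(t)]\bigr)$; replacing $\gamma$ by $\gamma^u$ scales each $\eta_\gamma^{-1}$ by $u^{-1}$ (a factor of $u^{-2}$ by $\cO$-bilinearity of $[\,,\,]$) while $\phi_{\gamma^u}=u\,\phi_\gamma$ contributes a compensating factor of $u$, giving $u^{-2}\cdot u=u^{-1}$. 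If you only account for the factor in $\phi$ you get $u$, and if you only account for one of the $\eta_\gamma^{-1}$'s you get $u^{-1}$ for the wrong reason; the bookkeeping is precisely the content of the claim and needs to be done. Once you insert this computation, your argument coincides with the paper's.
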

\begin{proof} Let $\phi_\gamma:P\map{}\cO$ be the composition 
(\ref{evaluation composition}), so that $$h_\gamma(s,t)=
\phi_\gamma([\eta_\gamma^{-1}(s),\eta_\gamma^{-1}(t)]).$$
The first equality is then immediate from the corresponding property
of the pairing $[\ ,\ ]$.  For the second property, we compute
$$h_{\gamma^u}(s,t)=\phi_{\gamma^u}([\eta_{\gamma^u}^{-1}(s),
\eta_{\gamma^u}^{-1}(t)])=
u^{-2}\phi_{\gamma^u}([\eta_\gamma^{-1}(s),\eta_\gamma^{-1}(t)])$$
and so it suffices to check $\phi_{\gamma^u}=u\phi_\gamma$, which 
is clear from the definition.  For the third property we must show
that $\phi(\mathfrak{p}^\iota)=-\phi(\mathfrak{p})$ 
for $\mathfrak{p}\in P$.  If we write $\mathfrak{p}=\frac{\lambda}
{\gamma^{p^n}-1}$ for some integer $n>0$ and $\lambda\in\Lambda$,
then $$\left(\frac{\lambda}{\gamma^{p^n}-1}\right)^\iota=
\frac{-\lambda^\iota}{\gamma^{p^n}-1}$$ in $P$, and so under the isomorphism
$P\map{\eta_\gamma} \cO_\infty\{1\}\iso \dlim \Lambda_n$
the action of $\iota$ on $P$ becomes  minus the natural action of 
$\iota$ on $\dlim \Lambda_n$.  For $\lambda\in\Lambda_n$, $\ev(\lambda^\iota)=
\ev(\lambda)$ and the claim follows. \end{proof}

Part (b) of the Lemma implies that the pairing of the following theorem
is well defined.

\begin{Thm}\label{second pairing} Keep the assumptions of
Theorem \ref{first pairing} and let $J\subset\Lambda$ be the augmentation
ideal. There is a canonical pairing 
$$h:H^1_\sel(F,S_\infty)\times H^1_\altsel(F,T_\infty)\map{}J/J^2$$
defined by $h(s,t)=h_\gamma(s,t)(\gamma-1)$ where $\gamma$ is any 
topological  generator of $\Gamma$.
This pairing satisfies  $h(\lambda s,t)=h(s,\lambda^\iota t)$ and
the kernels on either side are exactly the universal norms.
If $S=T$, $\sel=\altsel$, and the pairing $T\times T\map{}\cO(1)$ is 
symmetric (resp. alternating) then $h$ is alternating 
(resp. symmetric).
\end{Thm}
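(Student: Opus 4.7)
The plan is to reduce everything to Lemma \ref{pre-height lemma} and the elementary observation that $J/J^2$ is a free rank-one $\cO$-module generated by $\gamma - 1$ for any topological generator $\gamma$ of $\Gamma$, with the change-of-generator formula $\gamma^u - 1 \equiv u(\gamma - 1) \pmod{J^2}$ for $u \in \Z_p^\times$.

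First I would verify that the formula $h(s,t) = h_\gamma(s,t)(\gamma-1)$ genuinely defines an element of $J/J^2$ independent of the chosen generator. If $\gamma' = \gamma^u$ with $u \in \Z_p^\times$, then part (b) of Lemma \ref{pre-height lemma} gives $h_{\gamma'}(s,t) = u^{-1} h_\gamma(s,t)$ in $\cO$, while the congruence above gives $(\gamma' - 1) \equiv u(\gamma - 1)$ in $J/J^2$. Multiplying these together produces $h_{\gamma'}(s,t)(\gamma' - 1) \equiv h_\gamma(s,t)(\gamma - 1)$ in $J/J^2$, so the definition is unambiguous. This well-definedness is exactly the point of Lemma \ref{pre-height lemma}(b), as the statement just before the theorem already notes.

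Next I would deduce the algebraic properties directly from the corresponding statements for $h_\gamma$. The $\iota$-twisted linearity $h(\lambda s, t) = h(s, \lambda^\iota t)$ follows immediately from part (a) of Lemma \ref{pre-height lemma}, since multiplication by the fixed element $\gamma - 1 \in J/J^2$ commutes with scalars from $\cO$. For the symmetry statement in the case $S = T$, $\sel = \altsel$, part (c) of Lemma \ref{pre-height lemma} tells us that $h_\gamma$ itself is alternating or symmetric according to whether the Cartier duality pairing is symmetric or alternating; multiplying by $\gamma - 1$ preserves this, giving the claim.

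Finally, for the kernel computation I would use that the $\cO$-linear map $\cO \to J/J^2$ sending $x \mapsto x(\gamma-1)$ is an isomorphism. Hence the kernel of $h(\cdot, t)$ coincides with the kernel of $h_\gamma(\cdot, t)$, and similarly on the other side; by the paragraph preceding the theorem, these kernels are precisely the submodules of universal norms. There is essentially no obstacle here: all the real work was done in Theorem \ref{first pairing} (construction of $[\ ,\ ]$ and identification of its kernels), in Lemma \ref{evaluation} (to pass from $P$ to $\cO$ without losing information about kernels), and in Lemma \ref{pre-height lemma} (to track the dependence on $\gamma$). The present theorem is the bookkeeping step that repackages $h_\gamma$ into a canonical, generator-independent $J/J^2$-valued pairing.
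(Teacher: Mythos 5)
Your proposal is correct and follows essentially the same route as the paper's proof, which simply cites Lemmas \ref{evaluation} and \ref{pre-height lemma} together with Theorem \ref{first pairing}; you have merely unfolded that citation into the three bookkeeping checks (well-definedness via Lemma \ref{pre-height lemma}(b) and the congruence $\gamma^u-1\equiv u(\gamma-1)\pmod{J^2}$, linearity and symmetry via parts (a) and (c), and the kernel identification via the isomorphism $\cO\iso J/J^2$ and the preceding paragraph's invocation of Lemma \ref{evaluation}).
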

\begin{proof} All of the claims follow easily from Lemmas \ref{evaluation}
and \ref{pre-height lemma}, and
the properties of the pairing of Theorem \ref{first pairing}.
\end{proof}

\begin{Rem}\label{twisting} Keeping the notation of the thoerem,
suppose $L$ is a subfield of $F$ with $F_\infty/L$ Galois, and assume
that the action of $G_F$ on $S$ and $T$ extends to an action of $G_L$.
Then for $\bullet=\Iw$, $P$, $K$, or $\infty$, the action of 
$\Lambda$ on $H^1(F,S_\bullet)$ extends to an action of
$\Lambda_L=\cO[[\Gal(F_\infty/L)]]$.  Similarly for every place $v$ of $L$
the action of $\Lambda$ on the semi-localization
$$\bigoplus_{w|v}H^1(F_w,S_\bullet)$$ extends to an action of $\Lambda_L$.
If we assume that the local conditions $\sel$ and $\altsel$ are 
stable under the action of $\Lambda_L$, then $\Lambda_L$ acts on the
associated Selmer groups.
The action of $\Gal(F/L)$ on $\Gamma$ determines a character
$$\omega:\Gal(F/L)\map{}\Z_p^\times$$ by 
$\sigma\gamma\sigma^{-1}=\gamma^{\omega(\sigma)}$ for every 
$\sigma\in\Gal(F_\infty/L)$ and $\gamma\in\Gamma$.
Then for $\sigma\in\Gal(F_\infty/L)$ it can be shown that 
$h(s^\sigma, t^\sigma )= \omega(\sigma)\cdot h(s,t).$
\end{Rem}


\section{derived heights and derivatives of $L$-functions}
\label{derivative}


Throughout this section we work with fixed objects
$S$ and $T$ of $\Mod_\Sigma(\cO)$, and assume that these modules are
in Cartier duality.
Let $\sel$ and $\altsel$ be Selmer structures on $S_K$ and $T_K$, 
respectively, which are everywhere
exact orthogonal complements under the pairing of Lemma \ref{convolution},
and let $$h:H^1_\sel(F,S_\infty)\times H^1_\altsel(F,T_\infty)\map{}J/J^2$$
be the height pairing of Theorem \ref{second pairing}.
We abbreviate
$$Y_S=H^1_\sel(F,S_\infty)\hspace{1cm} Y_T=H^1_\altsel(F,T_\infty).$$

Let $Y=Y_S$ or $Y_T$. For any $r\ge 1$ set
$\delta_r(Y)=Y[J^{r}]/Y[J^{r-1}].$ A choice of topological generator
$\gamma\in\Gamma$ determines an injection $$\phi_{r,\gamma}:
\delta_r(Y)\map{}Y[J]$$ given by $\phi_{r,\gamma}(y)=(\gamma-1)^{r-1}y$, and
we denote its image by $Y^{(r)}\subset Y[J]$. This image
is independent of the choice of $\gamma$ and defines a decreasing
filtration $$\ldots\subset Y^{(3)}\subset 
Y^{(2)}\subset Y^{(1)}=Y[J].$$  The intersection
$\cap Y^{(r)}$ consists of the elements of $Y[J]$ which are universal norms 
in $Y$.

\begin{Rem}
We define a Selmer structure on $S$ by propagating $\sel$ through the
natural inclusion $S\map{}S_\infty$, and again denote this by $\sel$.
This inclusion induces a surjective map $$H^1_\sel(F,S)\map{} Y_S[J]$$
whose kernel is bounded (by the inflation-restriction sequence)
by the order of the group
$H^1(F_\infty/F, H^0(F_\infty,S))$.
Similar remarks hold for $T$.
\end{Rem}

\begin{Def}
We define the $r^\mathrm{th}$
derived height $$h^{(r)}:Y_S^{(r)}\times Y_T^{(r)}\map{}J^{r}/J^{r+1}$$
by the composition 
$$Y_S^{(r)} \times Y_T^{(r)} \map{\phi^{-1}_{r,\gamma}\times \mathrm{id}}
\delta_r(Y_S)\times Y_T^{(r)}\map{h}J/J^2\map{(\gamma-1)^{r-1}}
J^{r}/J^{r+1}.$$ It is easily checked that this is independent 
of the choice of $\gamma$.
Note that $h^{(1)}$ is nothing more than the restriction of $h$ to 
$Y_S[J]\times Y_T[J]$.
\end{Def}

If $S=T$ and $\sel=\altsel$, then for any $x,y\in Y_S$,
$$h^{(r)}(x,y)=\left\{\begin{array}{ll}
 (-1)^r h^{(r)}(y,x)&\mathrm{if\ }e \mathrm{\ symmetric}\\
(-1)^{r+1} h^{(r)}(y,x))&\mathrm{if\ }e \mathrm{\ alternating}.
\end{array}\right.$$
The following lemma implies that the kernels of 
$h^{(r)}$ on the left and right are $Y_S^{(r+1)}$ and 
$Y_T^{(r+1)}$, respectively.  
\begin{Lem}\label{restricted kernels}
If $\lambda_0, \lambda_1\in\Lambda$ are distinguished then
the kernels on the left and right of the restriction of $h$ to
$Y_S[\lambda_0]\times Y_T[\lambda_1]$ are the images of 
$$Y_S[\lambda_0\lambda_1^\iota]\map{\lambda_1^\iota}Y_S[\lambda_0]
\hspace{1cm}Y_T[\lambda_0^\iota\lambda_1]\map{\lambda_0^\iota}
Y_T[\lambda_1]$$ respectively.
\end{Lem}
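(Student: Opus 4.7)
The inclusion $\lambda_1^\iota Y_S[\lambda_0\lambda_1^\iota]\subseteq$ (left kernel of $h$ restricted to $Y_S[\lambda_0]\times Y_T[\lambda_1]$) is immediate from the identity $h(\lambda s,t)=h(s,\lambda^\iota t)$ of Theorem \ref{second pairing}: if $s=\lambda_1^\iota s'$ with $s'\in Y_S[\lambda_0\lambda_1^\iota]$, then $\lambda_0 s=\lambda_0\lambda_1^\iota s'=0$ puts $s$ in $Y_S[\lambda_0]$, and for every $t\in Y_T[\lambda_1]$ one obtains $h(s,t)=h(s',\lambda_1 t)=0$. The corresponding right-kernel containment follows by the symmetric argument.

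For the opposite inclusion I claim it suffices to prove the following: \emph{if $s\in Y_S[\lambda_0]$ satisfies $h(s,Y_T[\lambda_1])=0$, then $s\in\lambda_1^\iota Y_S$.} Indeed, once $s=\lambda_1^\iota s'$ is exhibited for some $s'\in Y_S$, the relation $\lambda_0\lambda_1^\iota s'=\lambda_0 s=0$ automatically forces $s'\in Y_S[\lambda_0\lambda_1^\iota]$, so nothing more is needed.

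To establish this key statement, the plan is to transfer the question to the $P$-valued pairing $[\ ,\ ]$ of Theorem \ref{first pairing} via the isomorphism $\eta_\gamma$, and then to work at the level of Iwasawa cohomology. The short exact sequence $0\to S_\Iw\to S_K\to S_P\to 0$, combined with the definition of the propagated Selmer structures, yields the identification
$$H^1_\sel(F,S_P)[\lambda_0]\;\iso\;H^1_\sel(F,S_\Iw)/\lambda_0\,H^1_\sel(F,S_\Iw),$$
and the analogous identification for $T_P$ with $\lambda_1$. A cocycle representative of $s\in H^1_\sel(F,S_P)[\lambda_0]$ can then be chosen as $\lambda_0^{-1}\tilde s_0$ with $\tilde s_0\in Z^1(G_\Sigma,S_\Iw)$ representing a class in $H^1_\sel(F,S_\Iw)$, and similarly for $t$. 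Substituting such representatives into the cup-product formula for $[\ ,\ ]$ from the proof of Theorem \ref{first pairing}, the vanishing hypothesis becomes a Cassels--Tate-style orthogonality; a Poitou--Tate global duality argument, refining the diagram chase in the proof of Theorem \ref{first pairing} to the $\lambda_i$-torsion setting, then identifies the left kernel of the restricted pairing with the image of $\lambda_1^\iota H^1_\sel(F,S_\Iw)$ in $H^1_\sel(F,S_\Iw)/\lambda_0\,H^1_\sel(F,S_\Iw)$, which, transported back through $\eta_\gamma$, is precisely $\lambda_1^\iota Y_S\cap Y_S[\lambda_0]=\lambda_1^\iota Y_S[\lambda_0\lambda_1^\iota]$.

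The principal obstacle is the Poitou--Tate bookkeeping in this restricted setting: one must check that the $\lambda_1$-torsion condition on the $T$-side translates, via the reciprocity law, into divisibility by $\lambda_1^\iota$ on the $S$-side, while remaining compatible with the Selmer conditions at every place in $\Sigma$. Since $\iota$ preserves the augmentation map, $\lambda_1^\iota$ is distinguished whenever $\lambda_1$ is, so $\lambda_1^\iota$ divides every universal norm in $Y_S$; this provides a useful consistency check against the unrestricted non-degeneracy statement of Theorem \ref{first pairing}, which is recovered in the limit as $\lambda_0,\lambda_1$ run through all distinguished elements.
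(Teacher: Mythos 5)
Your easy containment and the reduction of the hard direction to the claim ``$s\in Y_S[\lambda_0]$, $h(s,Y_T[\lambda_1])=0$ $\Rightarrow$ $s\in\lambda_1^\iota Y_S$'' are both correct. But the proof of that key claim is only a plan, not an argument: ``substituting such representatives into the cup-product formula \ldots a Poitou--Tate global duality argument, refining the diagram chase'' is a promissory note, and the one identification you do write down, $H^1_\sel(F,S_P)[\lambda_0]\iso H^1_\sel(F,S_\Iw)/\lambda_0 H^1_\sel(F,S_\Iw)$, is itself unjustified (one has to control $\lambda_0$-torsion in $H^2$ and check compatibility with the propagated local conditions). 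More fundamentally, re-opening the Poitou--Tate machinery misses the point of the lemma, which is that it is a \emph{formal} consequence of what Theorem \ref{second pairing} already gives you.

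The paper's proof never touches cohomology. Let $Z_S=Y_S/N_S$, $Z_T=Y_T/N_T$ with $N_S,N_T$ the submodules of universal norms. Theorem \ref{second pairing} says $h$ descends to a pairing $Z_S\times Z_T\to J/J^2$ with trivial kernels on both sides; since $J/J^2\iso\cO$ is an injective $\cO$-module, nondegeneracy forces the adjoint map to be an isomorphism $Z_S\iso\Hom(Z_T,J/J^2)$, $\iota$-semilinear in $\Lambda$. Passing to $\lambda_1^\iota$-coinvariants and $\lambda_1$-invariants gives $Z_S/\lambda_1^\iota Z_S\iso\Hom(Z_T[\lambda_1],J/J^2)$. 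Two further facts are used, both coming from the proof of Theorem \ref{first pairing} (the universal norms in $Y_\bullet$ are precisely the image of $H^1(F,\cdot_K)$, and distinguished elements act invertibly there): $N_S$ and $N_T$ are divisible by every distinguished element within themselves, so $Y_T[\lambda_1]\twoheadrightarrow Z_T[\lambda_1]$ (giving injectivity of $\Hom(Z_T[\lambda_1],J/J^2)\hookrightarrow\Hom(Y_T[\lambda_1],J/J^2)$), and the natural map $Y_S[\lambda_0]/\lambda_1^\iota Y_S[\lambda_0\lambda_1^\iota]\to Z_S/\lambda_1^\iota Z_S$ is injective (if $y=\lambda_1^\iota y'+n$ with $n\in N_S=\lambda_1^\iota N_S$, absorb $n$ into the $\lambda_1^\iota$-multiple). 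Chaining these injections yields exactness of $Y_S[\lambda_0\lambda_1^\iota]\xrightarrow{\lambda_1^\iota}Y_S[\lambda_0]\to\Hom(Y_T[\lambda_1],J/J^2)$, which is the statement. I would encourage you to redo the argument along these lines: it is shorter, it isolates precisely which properties of $h$ are being used, and it avoids the unresolved cohomological bookkeeping in your sketch.
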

\begin{proof} Let $Z_S$ and $Z_T$ be the quotients of $Y_S$ and $Y_T$ by
the submodules of universal norms.  The natural map
$$Y_S[\lambda_0]/\lambda_1^\iota Y_S[\lambda_1^\iota\lambda_0]\map{}
Z_S[\lambda_0]/\lambda_1^\iota Z_S[\lambda_1^\iota\lambda_0]
\map{} Z_S/\lambda_1^\iota Z_S$$ is an injection, and the height 
pairing defines an injection
$$Z_S/\lambda_1^\iota Z_S\iso \Hom(Z_T[\lambda_1],J/J^2)\map{}
\Hom(Y_T[\lambda_1],J/J^2).$$ This shows that
$$Y_S[\lambda_0\lambda_1^\iota]\map{\lambda_1^\iota}Y_S[\lambda_0]
\map{}\Hom(Y_T[\lambda_1],J/J^2)$$ is exact.
The kernel on the right is computed similarly.
\end{proof}

The pairing $e_\Iw$ of Lemma \ref{convolution} induces a perfect pairing
$$e_P:S_\Iw\times T_P\map{}P(1).$$ The identification
$T_P\iso T_\infty$ of Lemma \ref{polar isomorphism} and the map 
(\ref{evaluation composition}), both of which depend on a choice of 
topological generator, induce a perfect pairing
$$e_\infty:S_\Iw\times T_\infty\map{}\cO(1)$$ which does not depend on
the choice of generator.  If one views $S_\Iw$ as the space of
$S$-valued measures on $\Gamma$, and $T_\infty$ as the spaces of locally
constant $T$-valued functions on $\Gamma$, then this pairing is integration.
Using Lemma \ref{evaluation}, it can be checked that 
the local conditions $\sel$ and $\altsel$ on $S_\Iw$ and $T_\infty$
are everwhere exact orthogonal complements under the local Tate pairing 
$$H^1(F_v,S_\Iw)\times H^1(F_v,T_\infty)\map{}\cO$$ induced by $e_\infty$.

Denote by $\sel^\rel$ the Selmer structure on $S_\Iw$ whose local conditions
at places of $F$ not dividing $p$ are the same as those of $\sel$, but with
no condition imposed at primes above $p$.  At any place $v$ of $F$ we denote 
$$H^1_\msel(F_v,S_\Iw)=H^1(F_v,S_\Iw)/H^1_\sel(F_v,S_\Iw),$$
and we let 
$$H^1_{\bullet}(F_p,\ \ )=\bigoplus_{v\mid p}H^1_{\bullet}(F_v,\ \ )$$
denote the semi-local cohomology at $p$, where $\bullet$ is either
$\sel$ or $\msel$. Using the fact that the local
condition $\sel$ on $S_\Iw$ is propagated from a local condition on $S_K$,
it is easy to see that $H^1_\msel(F_v,S_\Iw)[f]=0$ for any place $v$ of $F$
and any distinguished $f\in\Lambda$.

For the motivation behind the following definition, see 
\cite{perrin-riou-asterisque}, \cite{rubin-height}, or
\cite{rubin-modular}.

\begin{Def}\label{L function}
For  any element
$$z=\{z_n\}\in\mil H^1_{\sel^\rel}(F,S_n)\iso H^1_{\sel^\rel}(F,S_\Iw)$$
define the $p$-adic $L$-function of $z$, $\pl_z$, to be the image 
of $z$ in $$H^1_\msel(F_p,S_\Iw)\iso \Hom( H^1_\altsel(F_p,T_\infty),\cO).$$
\end{Def}

Define the order of vanishing of $\pl_z$, $\ord(\pl_z)$, to be the
largest power of $J$ by which $\pl_z$ is divisible in $H^1_\msel(F_p,S_\Iw)$.
Equivalently, $\ord(\pl_z)$ is the largest integer $r$ such that
$$\pl_z( H^1_\altsel(F_p,T_\infty)[J^r] )=0.$$

For any topological generator $\gamma\in\Gamma$ and any
$r\le \ord(\pl_z)$ we define $\Der^r_\gamma(\pl_z)$ to be the 
preimage of $\pl_z$ under the injection 
$$H^1_\msel(F_p,S_\Iw)\map{(\gamma-1)^r}H^1_\msel(F_p,S_\Iw).$$
Define $$\pl^{(r)}_z: H^1_\altsel(F_p,T_\infty)\map{}J^r/J^{r+1}$$
by $\pl^{(r)}_z(c)=\Der_\gamma^r(\pl_z)(c)\cdot(\gamma-1)^r$. Then
$\pl^{(r)}_z$ is independent of the choice of $\gamma$.
The restriction of $\pl_z^{(r)}$ to $H^1_\altsel(F_p,T_\infty)[J]$
should be thought of as the ``special value'' of the $r^\mathrm{th}$
derivative of $\pl_z$, and we denote it by $\lambda_z^{(r)}$.

The following is a higher derivative version of Theorem 1 of
\cite{rubin-height}.  Note that the theorem asserts that a local 
divisibility (of the $p$-adic $L$-function $\pl_z$ 
by a power of $J$) implies a global divisibility 
(of $z_0$ by a power of $J$).

\begin{Thm}\label{main theorem}
Keep notation as above, with $r\le\ord(\pl_z)$, and propagate the
Selmer structure $\sel$ to $S$ via $S_\Iw\map{}S$.  This
is equal to the Selmer structure obtained by propagation through
$S\map{}S_\infty$. Then
\begin{enumerate}
\item  $\lambda^{(0)}_z=0$ if and only if $z_0\in H^1_\sel(F,S)$,
\item $\lambda^{(r)}_z=0$ if and only if $r<\ord(\pl_z)$,
\item suppose $0<r\le \ord(\pl_z)$,
then $z_0\in Y_S^{(r)}$ and for any $c\in Y_T^{(r)}$
$$h^{(r)}(z_0,c )=\lambda^{(r)}_z(c_p),$$ 
where $c_p$ is the image of $c$ in $H^1_\altsel(F_p,T_\infty)$.
\end{enumerate}
\end{Thm}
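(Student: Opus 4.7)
Parts (a) and (b) are essentially formal. For (a), the projection $z\mapsto z_0$ gives a map $H^1_{\sel^\rel}(F,S_\Iw)\to H^1(F,S)$, and $z_0$ automatically satisfies the $\sel$ condition at every $v\nmid p$ because $z$ does (on the complement of $p$, $\sel^\rel$ coincides with $\sel$). Hence $z_0\in H^1_\sel(F,S)$ iff $\loc_p(z_0)\in H^1_\sel(F_p,S)$. Under the control map $H^1_\msel(F_p,S_\Iw)\to H^1_\msel(F_p,S)$ the image of $\pl_z$ is precisely $\loc_p(z_0)\bmod\sel$, and via local Tate duality combined with Lemma \ref{evaluation} this vanishes iff $\pl_z$ annihilates the $J$-torsion of $H^1_\altsel(F_p,T_\infty)$, i.e.\ iff $\lambda^{(0)}_z=0$. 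Part (b) is an unwinding: $\lambda^{(r)}_z=0$ iff $\Der^r_\gamma(\pl_z)$ annihilates the $J$-torsion of $H^1_\altsel(F_p,T_\infty)$, which by the same duality together with the observation $H^1_\msel(F_p,S_\Iw)[f]=0$ for distinguished $f$ (noted in the text) is equivalent to $\Der^r_\gamma(\pl_z)\in(\gamma-1)H^1_\msel(F_p,S_\Iw)$, i.e.\ $\pl_z\in(\gamma-1)^{r+1}H^1_\msel(F_p,S_\Iw)$, i.e.\ $r<\ord(\pl_z)$.

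The substance lies in (c), where the plan is to feed the Iwasawa cocycle $z$ directly into the cochain recipe of Theorem \ref{first pairing}. Fix a topological generator $\gamma$ of $\Gamma$ and set $\tilde z:=(\gamma-1)^{-1}z\in C^1(G_\Sigma,S_K)$; since $z$ is a cocycle so is $\tilde z$, and the image of $\tilde z$ in $C^1(G_\Sigma,S_P)$ represents, via $\eta_\gamma\colon S_P\cong S_\infty$, the image of $z_0$ in $Y_S$. Using $\tilde s=\tilde z$ in (\ref{first pairing definition}), the fact that $\tilde z$ is a cocycle allows one to take $\epsilon_0=0$, so $[\bar z,\bar c]=\inv_\Sigma(\loc_\Sigma(\tilde z)\cup\tilde t_\Sigma)$ in $P$. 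At each $v\nmid p$, $\loc_v(\tilde z)=(\gamma-1)^{-1}\loc_v(z)$ inherits the $\sel$ condition from $z$, and its cup product with the $\altsel$-lift $\tilde t_v$ contributes zero to $\inv_v$ by the everywhere-orthogonality hypothesis. Only the contribution at $v\mid p$ survives, and unpacking the definitions via $P\xrightarrow{\eta_\gamma}\cO_\infty\{1\}\xrightarrow{\ev}\cO$ identifies it with the local Tate pairing of $\pl_z$ (rescaled by $(\gamma-1)^{-1}$) against $\loc_p(c)$.

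To produce the full statement for $r\ge 1$ one must promote $z_0$ to a class $y\in Y_S[J^r]$ with $(\gamma-1)^{r-1}y=z_0$, and relate $h(y,c)$ to $\Der^r_\gamma(\pl_z)(c_p)\cdot(\gamma-1)$. I would construct $y$ by induction on $r$: the divisibility $\pl_z\in(\gamma-1)^r H^1_\msel(F_p,S_\Iw)$ is locally unambiguous at $p$, and I would globalize it by working at finite level $n\gg 0$ in the Iwasawa tower, using Poitou--Tate global duality to produce classes in $H^1_\sel(F_n,S)$ which (thanks to the levelwise divisibility inherited from $\pl_z$) assemble via the Shapiro identifications into an element of $Y_S[J^r]$ with the required $(\gamma-1)^{r-1}$-multiple. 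With $y$ in hand, the cocycle-level calculation of the preceding paragraph, applied to a cochain lift of $y$ built out of the appropriate ``derivative'' of $z$, yields $h(y,c)=\Der^r_\gamma(\pl_z)(c_p)\cdot(\gamma-1)$; rescaling by $(\gamma-1)^{r-1}$ to pass from $h$ to $h^{(r)}$ gives $h^{(r)}(z_0,c)=\lambda^{(r)}_z(c_p)$.

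The principal obstacle is the construction of $y$: transferring a local divisibility of $\pl_z$ at $p$ into a global Iwasawa-tower class requires careful use of Poitou--Tate duality (the obstruction must be shown to vanish against the appropriate Selmer complement, using the exact orthogonality of $\sel$ and $\altsel$), and matching the resulting $y$ with the height pairing demands bookkeeping among the identifications $P\leftrightarrow\cO_\infty\{1\}\to\cO$, the involution $\iota$, and the factors of $(\gamma-1)$ introduced at each inductive step. The local cup-product computation itself, once $y$ has been produced, is a straightforward Iwasawa-theoretic adaptation of the argument in \cite{rubin-height}.
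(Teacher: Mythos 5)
Parts (a) and (b) are fine and track the paper's own argument: both reduce to the exactness of $H^1_\msel(F_p,S_\Iw)\map{\gamma-1}H^1_\msel(F_p,S_\Iw)\map{}H^1_\msel(F_p,S)$ together with local duality, with the torsion-freeness of $H^1_\msel(F_p,S_\Iw)$ supplying the one nontrivial direction in (b). The cocycle-level cup-product computation you describe for (c) is also, in substance, what the paper does: set $\tilde s=\tilde z$ (a cocycle, so $\epsilon_0=0$), kill the contributions away from $p$ by the $\sel/\altsel$-orthogonality, and unwind what remains at $p$ into $\Der^r_\gamma(\pl_z)(c_p)$.

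The gap is exactly where you flag it: the claim $z_0\in Y_S^{(r)}$. Your proposal is to ``globalize'' the local divisibility of $\pl_z$ directly by a Poitou--Tate argument at finite levels, but that sketch never explains what cohomological obstruction one would pair against, nor why it vanishes; as stated it is not a proof. The paper does \emph{not} construct the intermediate class at all. Instead it runs a bootstrap: first it establishes the cup-product formula $h^{(r)}(z_0,c)=\lambda^{(r)}_z(c_p)$ \emph{conditionally}, assuming $z_0\in Y_S^{(r)}$ (so that the required $s\in H^1_\sel(F,S_P)$ with $(\gamma-1)^{r-1}\eta_\gamma(s)=z_0$ already exists by definition of $Y_S^{(r)}$). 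It then proves $z_0\in Y_S^{(r)}$ by induction on $r$: the base case $r=1$ is (a)+(b); for $r>1$, apply the conditional formula at level $r-1$ to get $h^{(r-1)}(z_0,c)=\lambda^{(r-1)}_z(c_p)$, invoke (b) to see $\lambda^{(r-1)}_z=0$, and conclude $z_0$ lies in the left kernel of $h^{(r-1)}$, which by Lemma~\ref{restricted kernels} is $Y_S^{(r)}$. The global duality input is thereby absorbed into the already-proven kernel characterization of the derived heights, rather than re-derived from scratch. You should replace the proposed Poitou--Tate construction by this induction; without it, part (c) is not proved.
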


\begin{proof} Fix a topological generator $\gamma\in\Gamma$.
We have $\lambda^{(0)}_z=0$ if and only if 
$\pl_z$ is divisible by $J$.  The first claim now follows from exactness of
$$H^1_\msel(F_p,S_\Iw)\map{\gamma-1}H^1_\msel(F_p,S_\Iw)\map{}
H^1_\msel(F_p,S).$$

For the second,  $r<\ord(\pl_z)$ if and only if $\Der^r_\gamma(\pl_z)$ 
is divisible by $\gamma-1$ in $ H^1_\msel(F_p,S_\Iw)$, and this
is equivalent, by local duality, to $\Der^r_\gamma(\pl_z) $ vanishing on the
$J$-torsion in $H^1_\altsel(F,T_\infty)$.

For the third claim, first suppose $z_0\in Y_S^{(r)}$ 
and fix $c\in Y_T^{(r)}$.  Let $\tilde{z}\in
H^1_{\sel^\rel}(F,S_K)$ be defined by $\tilde{z}=z\otimes(\gamma-1)^{-r}$
and let $y$ denote the image of $\tilde{z}$ in $H^1_{\sel^\rel}(F,S_P)$.
Under the map $$H^1(F,S_P)\map{\eta_\gamma}H^1(F,S_\infty)$$ 
of Lemma \ref{polar isomorphism}, $(\gamma-1)^{r-1}y$ maps to $z_0$.
Since we are assuming that $z_0\in Y_S^{(r)}$ and $c\in Y_T^{(r)}$,
 we may choose $s\in H^1_\sel(F,S_P)$ and $d\in H^1_\altsel(F,T_P)$
which satisfy $$(\gamma-1)^{r-1}\eta_\gamma(s)
=z_0 \hspace{1cm} (\gamma^{-1}-1)^{r-1}\eta_\gamma(d)=c.$$

We have $(\gamma-1)^{r-1}\eta_\gamma(s-y)=0$, and so
\begin{eqnarray*}h^{(r)}(z_0,c)&=&(\gamma-1)^{r-1}\cdot h( \eta_\gamma(s), c)\\
&=&(\gamma-1)^{r-1}\cdot h( \eta_\gamma(y),c).
\end{eqnarray*}  Unraveling the definition of $h$, we find
$$h( \eta_\gamma(y),c)= (\gamma-1)\cdot \phi_\gamma\big([y,
(\gamma^{-1}-1)^{r-1}d]\big)$$ where $\phi_\gamma$ is the composition 
(\ref{evaluation composition}) and $[\ ,\ ]$ is the pairing of Theorem
\ref{first pairing}.
Choose a lift, $\tilde{d}_p$, of $\loc_p(d)$ to $H^1_\altsel(F_p,T_K)$, and
let $\tilde{c}_p=(\gamma^{-1}-1)^{r-1}\tilde{d}_p$.
We now have \begin{eqnarray*}\phi_\gamma\big([y,(\gamma^{-1}-1)^{r-1}d]
\big)&=&\phi_\gamma(\inv_p\big((\gamma-1)^{r-1}y\cup \tilde{d}_p\big)\big)\\
&=&\phi_\gamma\big(\inv_p\big(\tilde{z}\cup 
(\gamma^{-1}-1)^{-r}\tilde{c}_p\big)\big)\\
&=&\Der_\gamma^r(\pl_z)(c_p).
\end{eqnarray*}
Combining all of this gives
$$h^{(r)}(z_0,c)=(\gamma-1)^r\cdot \Der^r_\gamma(\pl_z)(c_p)
=\lambda^{(r)}_z(c_p).$$

We now show by induction on $r$ that $z_0\in Y_S^{(r)}$ for $1\le r\le
\ord(\pl_z)$.  The case $r=1$ follows from parts (a) and (b): since
$0<\ord(\pl_z)$ we must have $z_0\in H^1_\sel(F,S_\infty)[J]$.  For the
inductive step, if $z_0\in Y_S^{(r-1)}$ then we have shown that
$$h^{(r-1)}(z_0,c)=\lambda^{(r-1)}_z(c_p)$$ for every $c\in Y_T^{(r-1)}$.
But since $r-1<\ord(\pl_z)$, part (b) of the proposition implies
that $\lambda^{(r-1)}_z(c_p)=0$ and we conclude that $z_0$ is in the kernel
on the left of $h^{(r-1)}$.  This kernel is exactly $Y_S^{(r)}$, and the
claim is proven. \end{proof}


\section{Selmer groups of ordinary abelian varieties}
\label{abelian varieties}

Let $A$ be an abelian variety defined over $F$ and 
let $\Adual$ be the dual abelian variety. We assume throughout 
that $A$ has good ordinary reduction at all primes of $F$ above $p$, 
and that the primes of bad reduction are finitely 
decomposed in $F_\infty$.  Assume further that $p$ does not ramify 
in $F$, but that all primes of $F$ above $p$ do ramify in $F_\infty$. 
We wish to prove the following:
for each power $p^k$ of $p$
there are generalized Selmer groups
$$H^1_\sel(F_\infty,A[p^k])\subset H^1(F_\infty,A[p^k])\hspace{1cm}
H^1_\seldual(F_\infty,\Adual[p^k])\subset H^1(F_\infty,\Adual[p^k])$$
such that the inclusion
$A[p^k]\hookrightarrow A[p^\infty]$ induces a map of $\Lambda$-modules
$$H^1_\sel(F_\infty,A[p^k])\map{}\Sel_{p^\infty}(A/F_\infty)[p^k]$$
whose kernel and cokernel are finite and bounded as $k$ varies
(and similarly for $\Adual$), where $\Sel_{p^\infty}(A/F_\infty)$
is the usual $p$-power Selmer group associated to $A$.  These generalized
Selmer groups are of the type described in Section \ref{construction},
and so there is a height pairing
$$h_k:H^1_\sel(F_\infty,A[p^k])\times H^1_\seldual(F_\infty,\Adual[p^k])
\map{}J_k/J_k^2$$ where $J_k$ denotes the augmentation ideal of 
$(\Z/p^k\Z)[[\Gamma]]$, and this pairing enjoys all the properties of
that of Theorem \ref{second pairing}.

For every place of $F$, fix once and for all an extension to $\bar{F}$.
At any place $v$ of $F$ above $p$ and for any $k$ 
we let $\Fil_v A[p^k]$ be the kernel of the reduction map
$$A(\bar{F}_v)[p^k]\map{}\tilde{A}[p^k]$$ where 
$\tilde{A}$ is the reduction of 
$A$ at $v$.  Define $\Fil_v \Adual[p^k]$ similarly. Define
$\gr_v A[p^k]$ by exactness of
\begin{equation}\label{filtration sequence}
0\map{}\Fil_v A[p^k]\map{}A(\bar{F}_v)[p^k]\map{}
\gr_v A[p^k]\map{}0\end{equation}
and similarly for $\Adual$.  The reduction map 
on $p^k$-torsion is surjective, and so $\gr_v A[p^k]\iso \tilde{A}[p^k]$.

\begin{Lem}\label{splitting}
The submodules $\Fil_v A[p^k]$ and $\Fil_v \Adual[p^k]$ are exact
orthogonal complements under the Weil pairing.
\end{Lem}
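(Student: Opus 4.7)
The plan is to translate the statement from Galois-module language into the schematic language of finite flat group schemes by passing to the N\'eron model of $A$ over $\cO_v$, and then to apply Cartier duality. Since $A$ has good reduction at $v$, its N\'eron model $\mathcal{A}$ is an abelian scheme, so $\mathcal{A}[p^k]$ is a finite flat group scheme over $\cO_v$ with generic fiber $A[p^k]$, and the dual abelian scheme $\mathcal{A}^\vee$ is the N\'eron model of $\Adual$.

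First I would identify $\Fil_v A[p^k]$ with the $\bar{F}_v$-points of the connected component of the identity $\mathcal{A}[p^k]^{\circ}\subset \mathcal{A}[p^k]$. The connected-\'etale exact sequence
$$0\map{}\mathcal{A}[p^k]^{\circ}\map{}\mathcal{A}[p^k]\map{}\mathcal{A}[p^k]^{\mathrm{et}}\map{}0$$
splits after base change to the strict henselization of $\cO_v$ and therefore becomes a short exact sequence of abelian groups upon taking $\bar{F}_v$-points; this sequence coincides with the kernel-image-quotient sequence of the reduction map $A(\bar{F}_v)[p^k]\to\tilde{A}[p^k]$. Hence $\Fil_v A[p^k]=\mathcal{A}[p^k]^{\circ}(\bar{F}_v)$, and analogously $\Fil_v \Adual[p^k]=\mathcal{A}^\vee[p^k]^{\circ}(\bar{F}_v)$.

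Next I would invoke the fact that the Weil pairing extends uniquely to a perfect pairing of finite flat group schemes $\mathcal{A}[p^k]\times\mathcal{A}^\vee[p^k]\to\mu_{p^k}$ over $\cO_v$ identifying $\mathcal{A}^\vee[p^k]$ with the Cartier dual of $\mathcal{A}[p^k]$; this is the good-reduction case of Grothendieck's extension of the autoduality of the Picard scheme to N\'eron models. Under this identification, the annihilator of the closed subgroup scheme $\mathcal{A}[p^k]^{\circ}$ is $(\mathcal{A}[p^k]^{\mathrm{et}})^\vee$, a closed subgroup scheme of $\mathcal{A}^\vee[p^k]$. The ordinary hypothesis now guarantees that this annihilator coincides with $\mathcal{A}^\vee[p^k]^{\circ}$: because $A$ has ordinary reduction, $\mathcal{A}[p^k]^{\circ}$ is of multiplicative type and $\mathcal{A}[p^k]^{\mathrm{et}}$ is \'etale, so their Cartier duals are respectively \'etale and of multiplicative type (hence connected), which forces the connected-\'etale filtration on $\mathcal{A}^\vee[p^k]$ to be the Cartier dual of the one on $\mathcal{A}[p^k]$. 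Consequently $(\mathcal{A}[p^k]^{\mathrm{et}})^\vee=\mathcal{A}^\vee[p^k]^{\circ}$, and passing to $\bar{F}_v$-points delivers the desired exact orthogonality between $\Fil_v A[p^k]$ and $\Fil_v \Adual[p^k]$ under the Weil pairing.

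The principal obstacle is justifying that the Weil pairing extends to the N\'eron models as a perfect Cartier-duality pairing; once this standard but nontrivial fact is cited, the rest is a formal consequence of the connected-\'etale decomposition and the self-duality pattern of that filtration in the ordinary case.
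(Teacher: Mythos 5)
Your proof is correct, but it proceeds by a genuinely different route than the one in the paper. The paper's argument stays entirely at the level of Galois modules: ordinariness implies that $\Fil_v A[p^k]$ and $\Fil_v\Adual[p^k]$ each have order $p^{kg}$ (with $g=\dim A$) and are each isomorphic to $\mu_{p^k}^g$ as modules for the inertia group $\mathcal{I}_v$ at $v$; since there are no nontrivial $\mathcal{I}_v$-equivariant pairings $\mu_{p^k}^g\times\mu_{p^k}^g\to\mu_{p^k}$ (a short computation that uses the standing hypothesis that $p$ is unramified in $F$, so that the cyclotomic character restricted to $\mathcal{I}_v$ is already nontrivial mod $p$), the Weil pairing is forced to vanish on $\Fil_v A[p^k]\times\Fil_v\Adual[p^k]$, and exact orthogonality then drops out of a cardinality count against the perfectness of the full pairing. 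Your argument instead promotes everything to finite flat group schemes over $\cO_v$, identifies $\Fil_v A[p^k]$ with the points of the connected component of $\mathcal{A}[p^k]$, invokes the Cartier-duality extension of the Weil pairing over $\cO_v$, and observes that the connected-\'etale filtration is Cartier self-dual precisely because ordinariness makes the connected piece multiplicative and the quotient \'etale. Both arguments are sound. What your route buys is independence from the unramifiedness of $p$ (your argument works at any prime of good ordinary reduction, since the ramification of the cyclotomic character plays no role) and a more structural explanation of the orthogonality; what it costs is the need to cite the extension of the Weil pairing to a perfect duality of finite flat group schemes over $\cO_v$ together with the connected-\'etale formalism, whereas the paper's two-sentence argument relies only on an elementary computation of $\mathcal{I}_v$-invariant bilinear forms on $\mu_{p^k}^g$.
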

\begin{proof} 
The assumption that
$A$ has ordinary reduction ensures that $\Fil_v A[p^k]$ and 
$\Fil_v\Adual [p^k]$ have exact order $p^{kg}$, 
where $g=\mathrm{dim}(A)$. As modules for the inertia group $\mathcal{I}_v$
of $v$, each is isomorphic to a product of copies of $\mu_{p^k}$, and there
are nonontrivial $\mathcal{I}_v$ invariant pairings 
$\mu_{p^k}^g\times \mu_{p^k}^g\map{}\mu_{p^k}$.
\end{proof}

We set $\cO=\Z/p^k\Z$, $S=A[p^k]$, and $T=\Adual[p^k]$ and use the notation of
the first section.  Shapiro's lemma and Lemma \ref{polar isomorphism}
allow us to identify \begin{equation}\label{abelian shapiro}
H^1(F_\infty,A[p^k])\iso H^1(F,S_\infty)\iso H^1(F,S_P).\end{equation}
For ${\bullet}=\Iw$, $K$, $P$, or $\infty$, and
any place $v$ of $F$ above $p$, the submodule $\Fil_vS\subset S$ 
induces a submodule $\Fil_vS_{\bullet}\subset S_{\bullet}$ in an obvious way, 
and similarly with $S$ replaced by $T$ or with $\Fil_v$ replaced by
$\gr_v$.

Following Coates and Greenberg \cite{coates-greenberg}, we make the 

\begin{Def}\label{ordinary definition}
We define a Selmer structure, $\sel$, on $S_K$
by setting
$$
H^1_\sel(F_v,S_K)=\left\{\begin{array}{ll}
H^1_\unr(F_v,S_K)&\mathrm{if\ }v\not|\ p\\
\mathrm{image}\big(
H^1(F_v,\Fil_v S_K)\map{}
H^1(F_v,S_K)\big)
&\mathrm{if\ }v\mid p\end{array}\right.
$$
and define $\seldual$ on $T_K$ similarly.  
\end{Def}

The local conditions $\sel$ and $\seldual$ are everywhere exact 
orthogonal complements under the local Tate pairing induced by the Weil
pairing and Lemma \ref{convolution}.
We use the Selmer group $H^1_\sel(F,S_P)$ and
the identification (\ref{abelian shapiro}) to define a Selmer group
$H^1_\sel(F_\infty,A[p^k])$,
and make the definition for $\Adual$ similarly.

We must compare these generalized Selmer groups with the usual definitions.
Let $$\Fil_v \bS= \dlim (\Fil_v S_\infty) \hspace{1cm}\bS=\dlim S_\infty$$ 
where the limits are over $k$.  Shapiro's lemma identifies 
$H^1(F,\bS)\iso H^1(F_\infty,A[p^\infty])$ and for any place $v$ of $F$
$$H^1(F_v,\bS)\iso \dlim \bigoplus_w H^1(F_{n,w},A[p^\infty])$$
where the sum is over places $w$ of $F_n$ lying above $v$.

\begin{Def}
Define the ordinary Selmer structure on $\bS[p^k]$ by
$$H^1_\ord(F_v,\bS[p^k])=
\left\{\begin{array}{ll}
H^1_\unr(F_v,\bS[p^k])&\mathrm{else} \\
\mathrm{image}\big(
H^1(F_v,\Fil_v\bS[p^k]) \map{}
H^1(F_v,\bS[p^k])
\big)
&\mathrm{if\ }v\mid p
\end{array}\right.
$$
and let $H^1_\ord(F,\bS)=\dlim H^1_\ord(F,\bS[p^k])$.
\end{Def}

\begin{Prop}\label{greenberg}
The isomorphism $H^1(F,\bS)\iso H^1(F_\infty,A[p^\infty])$ identifies
$$H^1_\ord(F,\bS)\iso\Sel_{p^\infty}(A/F_\infty).$$
\end{Prop}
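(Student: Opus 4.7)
The plan is to check the identification place-by-place: for each $v \in \Sigma$, show that the local condition defining $H^1_\ord(F_v, \bS)$ agrees (under Shapiro's lemma) with the direct limit of the classical Kummer local conditions $A(F_{n,w}) \otimes \Q_p/\Z_p \hookrightarrow H^1(F_{n,w}, A[p^\infty])$ summed over places $w \mid v$ of $F_n$.

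First I would handle the archimedean and bad reduction places, i.e.\ $v \nmid p$. Here one must show that the direct limit of the classical local conditions coincides with $H^1_\unr(F_v, \bS)$. For good reduction places away from $p$, the Kummer image is already unramified at finite level, so one only needs to check that the limit of unramified classes remains unramified in $\bS$; this is routine because inertia acts trivially on $A[p^\infty]$ in a pro-$p$ way that vanishes after passing to the decomposition group at a place of $F_\infty$. For places of bad reduction, by hypothesis such $v$ are finitely decomposed in $F_\infty$, and one invokes the argument of Coates--Greenberg: the Kummer image in $H^1(F_{\infty,w}, A[p^\infty])$ vanishes because the pro-$p$ part of $\Gal(F_v^\unr/F_{\infty,w})$ is trivial and $A(F_{\infty,w}) \otimes \Q_p/\Z_p$ becomes divisible, so the local condition collapses to $H^1_\unr$ after taking the limit, exactly as in the analogous argument of Lemma \ref{unramified propagation}.

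Next I would treat the primes $v \mid p$, where the ordinary hypothesis is used. The key fact from Coates--Greenberg \cite{coates-greenberg} is that for an abelian variety with good ordinary reduction at $v$ and for any deeply ramified extension (such as $F_{\infty,w}/F_v$, using the ramification hypothesis on $F_\infty$), the Kummer image of $A(F_{\infty,w}) \otimes \Q_p/\Z_p$ in $H^1(F_{\infty,w}, A[p^\infty])$ coincides with the image of $H^1(F_{\infty,w}, \Fil_v A[p^\infty])$. The formal group identifies $A(\gm_{F_{\infty,w}}) \otimes \Q_p/\Z_p$ with $H^0(F_{\infty,w}, \gr_v A[p^\infty])$-type data, and one then checks that passing through Shapiro's lemma to $H^1(F_v, \Fil_v \bS[p^k])$ reproduces exactly the local condition in the definition of $H^1_\ord$. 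The orthogonality statement of Lemma \ref{splitting} matches the self-duality of the classical Selmer condition under the Weil pairing, giving a sanity check.

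Assembling: since the kernel of $H^1(G_\Sigma, \bS) \to \bigoplus_v H^1(F_v,\bS)/H^1_\ord(F_v,\bS)$ is the same, place-by-place, as the kernel defining $\Sel_{p^\infty}(A/F_\infty)$ (after applying Shapiro), the two Selmer groups agree as submodules of $H^1(F_\infty, A[p^\infty])$. The main obstacle is the calculation at $v \mid p$: one must carefully match the Kummer image with the image from $\Fil_v \bS$, which rests on the deeply ramified/Coates--Greenberg computation and the identification of the formal-group Kummer theory with cohomology of the filtration step, rather than on any soft categorical argument. The places $v \nmid p$ of bad reduction are the next most delicate, but the finite-decomposition hypothesis reduces them to the clean statement of Lemma \ref{unramified propagation}.
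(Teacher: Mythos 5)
Your proposal is correct and proceeds along essentially the same lines as the paper's proof: a place-by-place matching of local conditions, using Lemma \ref{unramified propagation} together with the finite-decomposition hypothesis to collapse the local condition at bad-reduction places outside $p$ to the trivial condition, and Coates--Greenberg (their Proposition 4.3) to identify the Kummer image at places above $p$ with the kernel of the reduction map, equivalently the image of $H^1(F_{\infty,w},\Fil_v A[p^\infty])$.

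One small remark: with the paper's choice of $\Sigma$ (archimedean places, places above $p$, and bad-reduction places only), the good-reduction places away from $p$ impose no local condition on $H^1_\ord(F,\bS)$ at all, since one works with $G_\Sigma$-cohomology. So the separate case you spend time on — checking that the Kummer image is unramified and stays unramified in the limit at good-reduction $v\nmid p$ — is not really a comparison of two nontrivial local conditions; what is actually needed there is that a class in $\Sel_{p^\infty}(A/F_\infty)$, which a priori is a class in $H^1(F_\infty,A[p^\infty])$, automatically factors through $G_\Sigma$, i.e.\ that the classical Selmer condition at good-reduction places outside $\Sigma$ is automatic. This is the content of the citation to Proposition 1.6.8 of Rubin in the paper's proof, which your write-up gestures at but should make explicit. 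Otherwise the argument and the key inputs are the same.
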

\begin{proof} Let $\Sigma$ be the set of primes 
of $F$ containing all archimedean
primes, primes above $p$, and primes at which $A$ has bad reduction, 
and let $F_\Sigma$ be the maximal extension of $F$ unramified outside
$\Sigma$.  Then both Selmer groups are defined as the subgroup
of $H^1(F_\Sigma/F_\infty,A[p^\infty])$ of elements which are locally
trivial at every $v\in\Sigma$ not dividing $p$ (this follows from Lemma 
\ref{unramified propagation} and Proposition 1.6.8 of \cite{rubin}),
and are in the kernel of reduction
$$H^1(F_{\infty,w},A[p^\infty])\map{}H^1(F_{\infty,w},\tilde{A}[p^\infty])$$
at places above $p$.
This description of the image of the Kummer map for $w\mid p$
can be found in \cite{coates-greenberg}, in particular Proposition 4.3. 
\end{proof}

\begin{Prop}\label{control}
There are natural maps
$$H^1_\sel(F_\infty,A[p^k])\map{}H^1_\ord(F,\bS[p^k])
\map{}H^1_\ord(F,\bS)[p^k]$$
whose kernels and cokernels are finite and bounded as $k$ varies.
\end{Prop}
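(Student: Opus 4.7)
The plan is as follows. Both maps are induced by the inclusion of coefficient modules $A[p^k]\hookrightarrow A[p^\infty]$ combined with Shapiro's lemma. After the identifications $S_P\iso S_\infty$ of Lemma \ref{polar isomorphism} and $H^1(F,S_\infty)\iso H^1(F_\infty,A[p^k])$ of Shapiro, both $H^1_\sel(F_\infty,A[p^k])$ and $H^1_\ord(F,\bS[p^k])$ sit inside the common ambient group $H^1(F_\infty,A[p^k])$, since $\bS[p^k]$ agrees with $S_\infty$ when $S=A[p^k]$. The first map is then the natural comparison of these two Selmer subgroups of a common ambient.

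To check that it is well defined and to bound its kernel and cokernel, I would verify that the two Selmer structures agree at each $v\in\Sigma$ up to bounded discrepancy. At $v\nmid p$, Lemma \ref{unramified propagation} shows that $\sel$ propagated to $S_P$ is exactly $H^1_\unr(F_v,S_P)$; since $\eta_\gamma$ is $G_v$-equivariant this matches $H^1_\unr(F_v,S_\infty)$, which by Shapiro is $\bigoplus_{w\mid v}H^1_\unr(F_{\infty,w},A[p^k])$, precisely the condition defining $\ord$. At $v\mid p$, both conditions are the image of the cohomology of $\Fil_v$ applied to the coefficient module, so once one checks (by repeating the cohomological-dimension-one argument of Lemma \ref{unramified propagation} for $\Fil_v S_\Iw$) that $\Fil_v S_K\to\Fil_v S_P$ induces a surjection on $H^1$, the two conditions coincide. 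Thus the comparison of global Selmer groups has kernel and cokernel bounded by local $H^0$'s at the places of $\Sigma$, all of which are finite.

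For the second map, the long exact sequence associated with multiplication by $p^k$ on the $p$-divisible module $\bS$ yields
\begin{equation*}
0\to A(F_\infty)[p^\infty]/p^k\to H^1(F,\bS[p^k])\to H^1(F,\bS)[p^k]\to 0,
\end{equation*}
using $H^0(F,\bS)=A(F_\infty)[p^\infty]$ via Shapiro. Intersecting with the $\ord$-local conditions and performing a snake-lemma diagram chase bounds the kernel by $A(F_\infty)[p^\infty]/p^k$ and the cokernel by a sum of analogous local $H^0$-type contributions arising from the local $p^k$-multiplication sequences (both on $\bS$ and on $\Fil_v\bS$ at $v\mid p$).

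The main obstacle will be bounding all of these torsion and local $H^0$ groups \emph{uniformly} in $k$. Here the standing hypotheses of Section \ref{abelian varieties} are essential: since every prime above $p$ ramifies in $F_\infty/F$, an open subgroup of $\Gamma$ acts through inertia on $\Fil_v A[p^\infty]$ at any $v\mid p$, which by a standard Mazur-style argument forces $A(F_{\infty,w})[p^\infty]$ to be finite at such places; combined with the hypothesis that primes of bad reduction are finitely decomposed in $F_\infty$ (so Lemma \ref{unramified propagation} applies to kill unramified cohomology at those places), this yields finiteness of $A(F_\infty)[p^\infty]$. The $p^k$-cotorsion of a finite group is bounded uniformly in $k$, and the same reasoning applied to the local $H^0$'s provides uniform bounds for the cokernel terms, completing the control.
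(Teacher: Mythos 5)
Your treatment of the second map (the $p^k$-multiplication sequence plus a snake-lemma comparison of local quotients, bounded by local $H^0$'s at bad primes and at $p$) is essentially the paper's argument, if more sketchily stated, and your treatment of the first map away from $p$ (both conditions are unramified, matched via Lemma \ref{unramified propagation}) is also correct.

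However, your argument for the first map at $v\mid p$ has a real gap. You propose to show that $H^1(F_v,\Fil_v S_K)\to H^1(F_v,\Fil_v S_P)$ is surjective ``by repeating the cohomological-dimension-one argument of Lemma \ref{unramified propagation} for $\Fil_v S_\Iw$,'' but that argument is specific to $v\nmid p$: it exploits $\mathrm{cd}\,\Gal(F_v^{\unr}/F_v)=1$ applied to \emph{unramified} cohomology, which is exactly why the lemma carries the hypothesis $v\nmid p$. At a $p$-adic place the absolute Galois group has cohomological dimension two, and the surjectivity claim is false in general: the cokernel of $H^1(F_v,\Fil_v S_K)\to H^1(F_v,\Fil_v S_P)$ injects into $H^2(F_v,\Fil_v S_\Iw)$, which need not vanish. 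Consequently the two local conditions at $v\mid p$ do \emph{not} coincide; one only has the inclusion $H^1_\sel(F_v,S_P)\subset H^1_\ord(F_v,S_P)$. The paper bounds the global quotient by $\bigoplus_{v\mid p}H^2(F_v,\Fil_v S_\Iw)$, identifies that by local duality and Shapiro's lemma with $\bigoplus_{w\mid p}\tilde{A}^\vee(L_w)[p^k]$, and then invokes the hypothesis that all primes above $p$ ramify in $F_\infty$ to make this group finite and bounded in $k$. You will need to replace your surjectivity claim with a bound of this kind; as written the argument asserts equality where only a controlled inequality holds.

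A smaller point: for the kernel of the second map it is not necessary (and not obviously true) that $A(F_\infty)[p^\infty]$ itself is finite, as you argue via inertia; one only needs that $A(F_\infty)[p^\infty]/p^k$ is bounded, which follows automatically because $A(F_\infty)[p^\infty]$ is a cofinitely generated $\Z_p$-module, so its quotient by the maximal divisible subgroup is finite. The paper uses this weaker (and unconditional) fact.
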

\begin{proof} For the first arrow, let $S=A[p^k]$, $\cO=\Z/p^k\Z$,
and recall the identifications $\bS[p^k]\iso S_\infty\iso S_P$.  
The first arrow becomes the inclusion \begin{equation}
\label{control inclusion}H^1_\sel(F,S_P)\subset H^1_\ord(F,S_P).
\end{equation}
The local conditions defining both Selmer groups are
unramified (using Lemma \ref{unramified propagation} for $\sel$) away
from $p$.  Above $p$, the local condition $H^1_\sel(F_v,S_P)$ 
is defined as the image of the composition
$$H^1(F_v,\Fil_v S_K)\map{}H^1(F_v,\Fil_v S_P)\map{}H^1(F_v,S_P),$$
while the ordinary local condition is defined as the image of the second arrow.
It follows that we have injections
\begin{eqnarray*}
H^1_\ord(F,S_P)/H^1_\sel(F,S_P)&\map{}&\bigoplus_v
H^1(F_v,\Fil_v S_P)/H^1(F_v,\Fil_v S_K)\\
&\map{}&\bigoplus_v H^2(F_v,\Fil_v S_\Iw)\end{eqnarray*} 
where the sums are over the primes $v$ of $F$ above $p$.
By local duality and Shapiro's lemma the order of 
$H^2(F_v,\Fil_v S_\Iw)$ is bounded by
$$\bigoplus_v H^0(F_v,\gr_v T_\infty)\iso 
\bigoplus_{w|p}\tilde{A}^\vee(L_w)[p^k]$$
where $T=\Adual[p^k]$, the second sum is over all primes of $F_\infty$ 
above $p$, $L_w$ denotes
the residue field of $F_\infty$ at $w$, and $\tilde{A}^\vee$ is the 
reduction of $\Adual$ at $w$.
This group is finite and bounded as $k$ varies, by the assumption 
that all primes of $F$ above $p$ ramify  in $F_\infty$.

To control the kernel and cokernel of 
\begin{equation}\label{control map}
H^1_\ord(F,\bS[p^k])\map{}H^1_\ord(F,\bS)[p^k]\end{equation}
 we use the exact sequence
$$0\map{}A(F_\infty)[p^\infty]/p^kA(F_\infty)[p^\infty]\map{}
H^1(F,\bS[p^k])\map{}H^1(F,\bS)[p^k]\map{}0$$
The kernel of (\ref{control map}) is bounded by the order of 
$A(F_\infty)[p^\infty]$ modulo 
its maximal divisible subgroup.  By the snake lemma, to bound the 
cokernel of (\ref{control map}) it suffices to bound the kernel
of $$\bigoplus_v H^1(F_v,\bS[p^k])/H^1_\ord(F_v,\bS[p^k])\map{}
\bigoplus_v H^1(F_v,\bS)/H^1_\ord(F_v,\bS)$$ and we compute this kernel
term by term in three cases.

Suppose first that $v$ does not divide $p$, and that $A$ has good reduction
at $v$.  Then the kernel of \begin{equation}\label{control kernels}
H^1(F_v,\bS[p^k])/H^1_\ord(F_v,\bS[p^k])\map{}H^1(F_v,\bS)/H^1_\ord(F_v,\bS)
\end{equation} injects into the kernel of
$H^1(F_v^\unr,\bS[p^k])\map{}H^1(F_v^\unr,\bS)$ which is
$$H^0(F_v^\unr,\bS)/p^k H^0(F_v^\unr,\bS)\iso \bS/p^k\bS =0.$$
If $v$ does not divide $p$ and $A$ has bad reduction at $v$, 
then our assumption that $v$ is finitely
decomposed in $F_\infty$ implies that $$H^1_\ord(F_v,\bS)=
H^1_\ord(F_v,\bS[p^k])=0$$ by Lemma \ref{unramified propagation}, and
so we must bound the kernel of the map
$$H^1(F_v,\bS[p^k])\map{}H^1(F_v,\bS).$$  This kernel
is $H^0(F_v,\bS)/p^k H^0(F_v,\bS)$ which  has order bounded by the 
size of the quotient of $\bigoplus A(F_{\infty,w})[p^\infty]$ by its maximal
divisible subgroup, where the sum is over primes $w$ of 
$F_\infty$ above $v$.

Lastly, if $v\mid p$ it suffices to bound the kernel of 
$$H^1(F_v,\gr_v \bS[p^k])\map{} H^1(F_v,\gr_v\bS).$$  This kernel is 
controlled by the order of
$$H^0(F_v,\gr_v\bS)\iso \bigoplus_w \tilde{A}(L_w)[p^\infty]$$
modulo its maximal divisible subgroup,
where the sum is over primes of $F_\infty$ above $p$, $L_w$ is the
residue field $F_\infty$ at $w$, and $\tilde{A}$ is the reduction of
$A$ at $v$. \end{proof}


\section{Semi-simplicity of Iwasawa modules}
\label{semi-simple}


We now set $\Lambda=\Z_p[[\Gal(F_\infty/F)]]$, 
let $J$ be the augmentation ideal of $\Lambda$, and
denote by $I_n\subset\Lambda$ the
kernel of the natural projection
$\Lambda\map{}\Z_p[\Gal(F_n/F)]$, so that $I_0=J$.
Keep $A_{/F}$ as in the preceeding section, so that
$A$ has good ordinary reduction at all primes of $F$ above $p$, 
the primes of bad reduction are finitely 
decomposed in $F_\infty$, $p$ does not ramify 
in $F$, and all primes of $F$ above $p$ do ramify in $F_\infty$. 

Let $Y_k=H^1_\sel(F_\infty, A[p^k])$ be the generalized 
Selmer group of Section \ref{abelian varieties}, 
and set $Y_k(F_n)=Y_k[I_n]$. We denote by
$$\ldots\subset Y^{(3)}_k\subset 
Y^{(2)}_k\subset Y^{(1)}_k=Y_k(F)$$
the filtration of Section \ref{derivative}.  The 
surjection $A[p^{k+1}]\map{p}A[p^k]$
induces a map $Y_{k+1}^{(r)}\map{}Y_{k}^{(r)}$, 
and we define 
$$Y_\infty^{(r)}=\mil Y_k^{(r)}\hspace{1cm}
Y_\infty(F_n)=\mil Y_k(F_n)$$
so that $Y_\infty^{(r)}$ defines a 
decreasing filtration of $Y_\infty(F)$.
Set $$Y=\Sel_{p^\infty}(A/F_\infty)
\hspace{1cm}X=\Hom_{\Z_p}(Y,\Q_p/\Z_p).$$
These are cofinitely and finitely generated $\Lambda$-modules,
respectively.
By Propositions \ref{greenberg} and \ref{control}
we have canonical maps $Y_k\map{}Y[p^k]$
with kernel and cokernel finite and bounded as $k$ varies,
and these induce maps
$$Y_k^{(r)}\iso Y_k[J^r]/Y_k[J^{r-1}]\map{}
(Y[J^r]/Y[J^{r-1}])[p^k]
$$
whose kernels and cokernels
are again finite and bounded as $k$ varies.
The $\Z_p$-module $Y[J^r]/Y[J^{r-1}]$ is cofinitely generated,
and so
\begin{eqnarray}\label{kernel rank}
\rank (Y_\infty^{(r)})&=&\rank\left(\mil (Y[J^r]/Y[J^{r-1}])[p^k]\right)\\
&=&\corank (Y[J^r]/Y[J^{r-1}])\nonumber \\
&=&\rank (J^{r-1}X/J^rX)\nonumber .
\end{eqnarray}

\begin{Lem}\label{semi-integral}
Define $S_p(A/F_n)=\mil\Sel_{p^k}(A/F_n)$.
There is a canonical isomorphism
$$Y_\infty(F_n)\otimes\Q_p\map{j}S_p(A/F_n)\otimes\Q_p.$$
Furthermore this isomorphism is \emph{semi-integral} in the sense
that there are integers $t_0, t_1$, independent of $n$, such that
$p^{t_0}j(Y_\infty(F_n))$ is contained  the lattice generated by
$S_p(A/F_n)$, and $p^{t_1}j^{-1}(S_p(A/F_n))$ is contained in the 
lattice generated by $Y_\infty(F_n)$.
\end{Lem}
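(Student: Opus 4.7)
The plan is to build a natural $\Z_p$-linear map $j \colon Y_\infty(F_n) \to S_p(A/F_n)$ whose kernel and cokernel are finite of $p$-power order bounded independently of $n$; upon tensoring with $\Q_p$ the kernel and cokernel vanish, yielding both the isomorphism and the semi-integrality constants $t_0, t_1$ in one stroke. The construction factors through $Y = \Sel_{p^\infty}(A/F_\infty)$, and the uniform bounds come from combining Proposition \ref{control} with a control theorem at finite levels.

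First, Proposition \ref{control} supplies $\Lambda$-module maps $Y_k \to Y[p^k]$ whose kernel and cokernel are finite of order bounded independently of $k$. Taking $I_n$-invariants and using that $H^1(\Gamma_n, K)$ is bounded by $|K|$ for any finite module $K$ (since $\Gamma_n$ is cyclic), a snake-lemma argument yields
\[Y_k(F_n) = Y_k[I_n] \to Y[p^k]^{\Gamma_n} = Y^{\Gamma_n}[p^k]\]
with kernel and cokernel bounded uniformly in both $k$ and $n$.

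Next I would invoke the control theorem of Mazur, as extended by Greenberg to abelian varieties with good ordinary reduction, giving restriction maps $\Sel_{p^\infty}(A/F_n) \to Y^{\Gamma_n}$ with kernel and cokernel finite of order bounded independently of $n$. The three standing hypotheses of Section \ref{abelian varieties}---good ordinary reduction above $p$, finite decomposition of bad primes in $F_\infty$, and ramification of all primes above $p$ in $F_\infty$---are exactly what guarantees uniform boundedness of the local error terms at the primes of $\Sigma$. Combining with the Kummer-style exact sequence
\[0 \to A(F_n)[p^\infty]/p^k A(F_n)[p^\infty] \to \Sel_{p^k}(A/F_n) \to \Sel_{p^\infty}(A/F_n)[p^k] \to 0\]
(whose leftmost term is bounded by the order of $A(F_\infty)[p^\infty]$ modulo its maximal divisible subgroup) produces a zig-zag
\[Y_k(F_n) \to Y^{\Gamma_n}[p^k] \leftarrow \Sel_{p^k}(A/F_n)\]
of maps with uniformly bounded kernels and cokernels. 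Passing to the inverse limit over $k$ preserves these bounds, since finite groups of uniformly bounded order automatically satisfy Mittag-Leffler, and after tensoring with $\Q_p$ the right-hand arrow becomes an isomorphism; inverting it defines $j$.

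The main obstacle is securing the uniform Mazur--Greenberg control theorem in this generality: one must verify that the kernel and cokernel of restriction $\Sel_{p^\infty}(A/F_n) \to Y^{\Gamma_n}$ are dominated by local contributions at the bad primes and primes above $p$, each of which is controlled by the finite-decomposition and ramification hypotheses, while the archimedean places are harmless because $p$ is odd. Once this is in hand, the remaining diagram chases and the passage to the inverse limit are routine, and the uniform bound on the cokernel of $j$ translates directly into the constants $t_0$ and $t_1$ (with $t_0 = 0$ available because $j$ is a genuine map of $\Z_p$-modules, not merely of $\Q_p$-vector spaces).
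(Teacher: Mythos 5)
Your approach is essentially the paper's: a zig-zag
\[
Y_k(F_n)\;\longrightarrow\;Y[I_n+p^k\Lambda]\;\longleftarrow\;\Sel_{p^k}(A/F_n)
\]
of maps with kernels and cokernels uniformly bounded in $k$ and $n$ (left arrow from Proposition \ref{control} plus a $\Gal(F_\infty/F_n)$-invariants argument, right arrow from Mazur control plus the surjection $\Sel_{p^k}(A/F_n)\twoheadrightarrow\Sel_{p^\infty}(A/F_n)[p^k]$), followed by passage to $\mil_k$ and tensoring with $\Q_p$. Two small points. First, your $Y^{\Gamma_n}$ should be $Y^{\Gal(F_\infty/F_n)}$, i.e.\ $Y[I_n]$; the quoted boundedness of $H^1(\cdot,K)$ should accordingly be for the pro-cyclic group $\Gal(F_\infty/F_n)\cong\Z_p$, not $\Gamma_n=\Gal(F_n/F)$. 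This is just a notational slip and the argument is fine. Second, the parenthetical claim that $t_0=0$ is not justified by your construction: $j$ is obtained by inverting the right-hand arrow, which has bounded but \emph{a priori nonzero} cokernel (Mazur's control theorem only gives bounded cokernel, not surjectivity), so the inverse only exists after tensoring with $\Q_p$ and need not carry $Y_\infty(F_n)$ into the lattice $S_p(A/F_n)$ on the nose. This does not affect the validity of the lemma, which only asserts semi-integrality with \emph{some} constants $t_0,t_1$; the paper itself makes no stronger claim, and in fact the lattice identification it does establish is between $\mil\Sel_{p^\infty}(A/F_n)[p^k]$ and $S_p(A/F_n)$, which goes in the other direction (the canonical map is a surjection with bounded kernel, so lattices agree there, but that is the opposite arrow from $j$).
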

\begin{proof}
We have maps 
$$Y_k(F_n)\rightarrow\Sel_{p^\infty}(A/F_\infty)[I_n+p^k\Lambda]
\leftarrow \Sel_{p^\infty}(A/F_n)[p^k]$$
whose kernels and cokernels are finite and bounded as both $k$ and $n$ vary
(the second arrow by Mazur's control theorem \cite{mazur-abelian}), 
and so taking the inverse
limit in $k$ and tensoring with $\Q_p$ we obtain a semi-integral 
isomorphism \begin{equation}\label{integral}
Y_\infty(F_n)\otimes\Q_p\iso (\mil\Sel_{p^\infty}(A/F_n)[p^k])\otimes\Q_p.
\end{equation}
For every $k$ there is a canonical surjection
\begin{equation}\label{strong integral}
\Sel_{p^k}(A/F_n)\map{}\Sel_{p^\infty}(A/F_n)[p^k]
\end{equation}
and these maps are compatible, in the obvious sense, as $k$ varies.
As $k$ varies the kernels are uniformly bounded by the order of the 
finite group $A(F_n)[p^\infty]$, and so passing to the inverse limit 
over $k$ and tensoring with $\Q_p$, we see that 
the right hand side of (\ref{integral})
is isomorphic to $S_p(A/F_n)\otimes \Q_p$.  Surjectivity of
(\ref{strong integral}) implies that this isomorphism 
identifies the lattices generated by
$\mil\Sel_{p^\infty}(A/F_n)[p^k]$ and  $S_p(A/F_n)$.
\end{proof}

The subspace of $S_p(A/F)\otimes\Q_p$ 
generated by the image of $Y_\infty^{(r)}$ 
under the isomorphism 
\begin{equation}\label{comparison}
Y_\infty(F)\otimes\Q_p\map{}S_p(A/F)\otimes\Q_p
\end{equation}
will be denoted 
$S^{(r)}_p(A/F)$, and we set $S^{(\infty)}_p(A/F)=\cap_r
S^{(r)}_p(A/F)$.
 Let $W_r$ be the one-dimensional $\Q_p$-vector
space $$W_r=(J^r/J^{r+1})\otimes \Q_p.$$  The derived height pairings of 
Section \ref{derivative} are compatible as $k$ varies, and
passage to the limit yields the pairing of the following theorem.

\begin{Thm}\label{vector height}
There is a filtration 
$$\ldots\subset S_p^{(3)}(A/F)\subset S_p^{(2)}(A/F)\subset
 S_p^{(1)}(A/F)=S_p(A/F)\otimes\Q_p$$
such that $\mathrm{dim}_{\Q_p} S_p^{(r)}(A/F)=\rank(J^{r-1}X/J^r X)$
(and similarly for $A^\vee$), and a sequence of pairings
$$h^{(r)}:S_p^{(r)}(A/F)\times S_p^{(r)}(A^\vee/F)\map{}W_r$$
such that the kernel on the left (resp. right) is $S_p^{(r+1)}(A/F)$
(resp. $S_p^{(r+1)}(A^\vee/F)$).  

The subspace $S_p^{(\infty)}(A/F)$ is the subspace of \emph{universal
norms} in the usual sense.  That is, $S_p^{(\infty)}(A/F)$ is the
subspace generated by the intersection over $n$ of the image
of corestriction $S_p(A/F_n)\map{}S_p(A/F)$.
Furthermore, if $\phi:A\map{}A^\vee$ is a polarization 
then $h^{(r)}$ satisfies
$$h^{(r)}(a,\phi(b))=(-1)^{r+1}h^{(r)}(b,\phi(a))$$ 
for all $a,b\in S_p^{(r)}(A/F)$.
\end{Thm}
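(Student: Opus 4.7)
The plan is to obtain Theorem \ref{vector height} by taking the inverse limit over $k$ of the finite-level constructions of Sections \ref{construction} and \ref{derivative}, then tensoring with $\Q_p$ and transporting everything across the semi-integral isomorphism of Lemma \ref{semi-integral}. Concretely, I would define $S_p^{(r)}(A/F)$ to be the $\Q_p$-span of the image of $Y_\infty^{(r)}$ inside $S_p(A/F)\otimes\Q_p$; the nesting $Y_\infty^{(r+1)}\subset Y_\infty^{(r)}$ produces the filtration, and the dimension identity is exactly the chain of equalities recorded in (\ref{kernel rank}).

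For the pairings, set $Y_k^\vee=H^1_\seldual(F_\infty,A^\vee[p^k])$. At each level $k$, Section \ref{derivative} supplies derived heights
$$h_k^{(r)}: Y_k^{(r)} \times (Y_k^\vee)^{(r)} \map{} J_k^r/J_k^{r+1}$$
compatible as $k$ varies, by naturality of the cup product and of the maps $\eta_\gamma$ under reduction modulo $p^k$. Passing to the inverse limit in $k$ with $\mil J_k^r/J_k^{r+1}\iso J^r/J^{r+1}$ produces a $\Z_p$-valued pairing on the integral objects, and tensoring with $\Q_p$ followed by transport across Lemma \ref{semi-integral} delivers $h^{(r)}$ with target $W_r$. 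The kernel claim then follows from the statement in Section \ref{derivative} that the kernels of $h_k^{(r)}$ on either side are precisely $Y_k^{(r+1)}$ and $(Y_k^\vee)^{(r+1)}$ (a consequence of Lemma \ref{restricted kernels}): bounded kernels and cokernels wash out in the limit and vanish after $\otimes\Q_p$.

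The identification of $S_p^{(\infty)}(A/F)$ with the classical universal-norm subspace is the key structural check. Under Lemma \ref{semi-integral}, corestriction $S_p(A/F_n)\to S_p(A/F)$ corresponds, up to semi-integral adjustment, to multiplication by the norm element $\nu_n=(\gamma^{p^n}-1)/(\gamma-1)$ carrying $Y_\infty[I_n]$ into $Y_\infty[J]$. Any $y\in Y_\infty[J]$ of the form $\nu_n z$ necessarily has $z\in Y_\infty[I_n]$ (apply $\gamma-1$), so $Y_\infty[J]\cap\nu_n Y_\infty = \nu_n Y_\infty[I_n]$. Combining the description in Section \ref{derivative} of $\bigcap_r Y_\infty^{(r)}$ as the universal norms inside $Y_\infty[J]$ with Lemma \ref{coterminal} (divisibility by every distinguished element is divisibility by every $\nu_n$) yields $\bigcap_r Y_\infty^{(r)}=\bigcap_n \nu_n Y_\infty[I_n]$; after $\otimes\Q_p$ and transport this becomes precisely $\bigcap_n \mathrm{image}(\cor:S_p(A/F_n)\to S_p(A/F))\otimes\Q_p$.

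The skew-symmetry under a polarization $\phi:A\to A^\vee$ is a direct consequence of the parity property of $h^{(r)}$ recorded in Section \ref{derivative}: the composition of the Weil pairing with $\phi$ is an alternating pairing $A\times A\to\cO(1)$, so the ``alternating'' case gives $h^{(r)}(a,\phi(b))=(-1)^{r+1}h^{(r)}(b,\phi(a))$. The principal obstacle throughout is the limit-and-tensor bookkeeping: one must verify that the bounded kernels and cokernels of Propositions \ref{greenberg} and \ref{control} and of Lemma \ref{semi-integral} accumulate only as bounded torsion that disappears after $\otimes\Q_p$, so that filtrations, kernels, and the universal-norm identification all transport faithfully. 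The most delicate step is the universal-norm identification itself, since it reconciles two a priori different notions of ``norm''---Galois-theoretic corestriction on one side, $\Lambda$-algebraic divisibility on the other.
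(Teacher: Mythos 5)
Your overall strategy matches the paper's: define $S_p^{(r)}$ as the $\Q_p$-span of the image of $Y_\infty^{(r)}$ under Lemma~\ref{semi-integral}, read dimensions off (\ref{kernel rank}), build $h^{(r)}$ as the inverse limit over $k$ of the finite-level derived heights tensored with $\Q_p$, and deduce the kernels and the sign from the corresponding finite-level statements of Sections~\ref{construction} and \ref{derivative}. The paper indeed dispatches all of this in a single sentence and concentrates, as you do, on the universal-norm identification.

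The one place you need to be more careful is exactly that identification. You invoke, directly at the level of $Y_\infty$, the Section~\ref{derivative} characterization of $\cap_r Y^{(r)}$ as universal norms together with Lemma~\ref{coterminal} (divisibility by every distinguished element equals divisibility by every norm element $g_n$, your $\nu_n$). Both of those results are proved under the standing hypothesis that $\cO$ is topologically discrete, whereas $Y_\infty$ is a module over $\Z_p[[\Gamma]]$, and Lemma~\ref{coterminal} is genuinely false over $\Z_p$: the sequence $p^n\to 0$ is never divisible by a fixed nonunit distinguished polynomial. The paper's proof sidesteps this by first descending to finite level: for fixed $k$, the image $y_k\in Y_k$ of $y$ lies in $(\gamma-1)^r Y_k$ for all $r$, so Lemma~\ref{coterminal} over $\cO=\Z/p^k\Z$ does apply and shows $y_k$ is a universal norm, hence divisible by each $g_n$; only then does one pass to the inverse limit over $k$ (which is exact because the $Y_k(F_n)$ are finite) to produce $z_n\in Y_\infty(F_n)$ with $y=g_n z_n$, and the semi-integrality in Lemma~\ref{semi-integral} converts $g_n$-divisibility in $Y_\infty$ into membership in the corestriction image up to a bounded power of $p$. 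Inserting that finite-level reduction makes your argument coincide with the paper's.
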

\begin{proof}
All of the claims are immediate from the corresponding properties
of the derived heights over discrete coefficient rings, 
together with the equality (\ref{kernel rank}), except for the
characterization of $S_p^{(\infty)}(A/F)$.  Suppose 
$x\in S_p^{(\infty)}(A/F)$.  Then for some integer $t$ there is
a $y\in Y_\infty(F)$, contained in  $Y_\infty^{(r)}$ for every $r$,
such that $y$ maps to $p^t x$ under (\ref{comparison}).
Fix a topological generator $\gamma\in\Gamma$ and set 
$g_n=\frac{\gamma^{p^n}-1}{\gamma-1}\in\Lambda$.
If $y_k$ denotes the image of $y$ in $Y_k(F)$, 
we claim that $y_k$ is in the image of $g_n:Y_k(F_n)\map{}Y_k(F)$
for every $n$.  Indeed, $y_k\in (\gamma-1)^r Y_k$ for every 
$r$, and it follows from Lemma \ref{coterminal} that $y_k$
is a universal norm (in the sense of Definition \ref{norm def})
in $Y_k$.  In particular $y_k$ is divisible by every $g_n$.
Passing to the limit, we must have that $y$ is in the image
of $g_n:Y_\infty(F_n)\map{}Y_\infty(F)$ for every $n$, say $y=g_n z_n$.
If $t_0$ is as in Lemma \ref{semi-integral}, then the image of
$p^{t_0} z_n$ in $S_p(A/F_n)\otimes\Q_p$ is integral and corestricts
to $p^{t+t_0}x$.  Hence $p^{t+t_0}x$ is a universal norm.
The opposite implication is entirely similar.
\end{proof}

By the structure theorem for finitely-generated Iwasawa modules,
we may fix a pseudo-isomorphism of $\Lambda$-modules
$$X\sim \Lambda^{e_\infty}\oplus M\oplus M'$$
such that  $M'$ is a torsion $\Lambda$-module with characteristic ideal
prime to $J$, and $M$ has the form
$$M\iso (\Lambda/J)^{e_1}\oplus (\Lambda/J^2)^{e_2}\oplus\ldots.$$

The first statement of the following is due to Perrin-Riou
\cite{perrin-riou}.
\begin{Cor}\label{semi-simplicity}
The integers $e_i$ satisfy the following properties:
\begin{enumerate}
\item the height pairing $h^{(1)}$ is nondegenerate if and only if
$e_i=0$ for $1<i\le\infty$,
\item
$e_\infty=\mathrm{dim}_{\Q_p}S_p^{(\infty)}(A/F)$,
\item $e_r=\dim_{\Q_p}\big(S_p^{(r)}(A/F)/S_p^{(r+1)}(A/F)\big),$
\item $e_{r}\equiv 0\pmod{2}$ when $r$ is even.
\end{enumerate}
\end{Cor}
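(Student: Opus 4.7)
The plan is to derive all four claims from the single rank identity
$$\dim_{\Q_p} S_p^{(r)}(A/F) = \rank_{\Z_p}(J^{r-1}X/J^r X),$$
provided by Theorem \ref{vector height} together with (\ref{kernel rank}), combined with the stated pseudo-isomorphism for $X$. Identifying $\Lambda \cong \Z_p[[T]]$ with $J = (T)$, each graded piece $J^k/J^{k+1}$ is free of $\Z_p$-rank one. Hence the summand $\Lambda^{e_\infty}$ contributes $e_\infty$ to the rank for every $r$; a short computation shows $(\Lambda/J^i)^{e_i}$ contributes $e_i$ when $r \le i$ and nothing otherwise; and $M'$ contributes zero, because $\mathrm{char}(M')$ being coprime to $J$ forces $M'/JM'$ to be killed by a nonzero element of $\Z_p$, hence finite, whence each $J^{r-1}M'/J^rM'$ is finite. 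A pseudo-isomorphism over $\Z_p[[T]]$ has finite kernel and cokernel, so it cannot affect $\Z_p$-ranks of subquotients. Summing yields
$$\dim_{\Q_p} S_p^{(r)}(A/F) = e_\infty + \sum_{i \ge r} e_i.$$

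With this closed form, parts (a)--(c) follow formally. Telescoping in $r$ gives (c). For (b), since $X$ is finitely generated only finitely many $e_i$ are nonzero, so $\dim S_p^{(r)}$ stabilizes at $e_\infty$ for $r \gg 0$; the nested intersection $S_p^{(\infty)} = \bigcap_r S_p^{(r)}$ of finite-dimensional spaces therefore equals this eventually-constant $S_p^{(r)}$, yielding $\dim S_p^{(\infty)} = e_\infty$. For (a), Theorem \ref{vector height} shows that both kernels of $h^{(1)}$ are $S_p^{(2)}$, and the formula makes $\dim S_p^{(2)} = e_\infty + \sum_{i \ge 2} e_i$ vanish exactly when $e_i = 0$ for every $1 < i \le \infty$.

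For (d) I would exploit the polarization $\phi : A \to A^\vee$. Since $\phi$ arises from an ample line bundle, the composition of $\phi$ with the Weil pairing is alternating on $A[p^k]$, so the final assertion of Theorem \ref{vector height} gives $h^{(r)}(a,\phi(b)) = (-1)^{r+1} h^{(r)}(b,\phi(a))$. When $r$ is even this is an alternating pairing on $S_p^{(r)}(A/F)$. Because $\phi$ is $G_F$-equivariant and compatible with every step of the constructions in Sections \ref{construction}--\ref{abelian varieties}, it induces an isomorphism of filtered $\Q_p$-spaces $S_p^{(\bullet)}(A/F) \iso S_p^{(\bullet)}(A^\vee/F)$, so the pairing descends to a nondegenerate alternating form on the $e_r$-dimensional quotient $S_p^{(r)}(A/F)/S_p^{(r+1)}(A/F)$. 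Existence of such a form on a $\Q_p$-vector space forces its dimension to be even, giving $e_r \equiv 0 \pmod 2$.

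The main obstacle I anticipate is the bookkeeping around the structure theorem: verifying that pseudo-null modules and the non-$J$-primary summand $M'$ contribute nothing to each graded piece $J^{r-1}X/J^r X$. This is routine over the regular ring $\Z_p[[T]]$ (pseudo-null means finite, and coprimality of $\mathrm{char}(M')$ with $J$ makes $M'/J^k M'$ finite), but it is the one place where one really engages with the structure theorem rather than manipulating pairings formally. A secondary point deserving care in (d) is that $\phi$ genuinely respects the derived filtration on the $\Q_p$-Selmer space, which should follow from naturality of each layer of the construction.
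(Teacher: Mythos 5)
Your proposal is correct and takes essentially the same route as the paper: both deduce the formula $\dim_{\Q_p}S_p^{(r)}(A/F)=e_r+e_{r+1}+\cdots+e_\infty$ from Theorem \ref{vector height} together with the structure theorem, read off parts (a)--(c) formally, and obtain (d) by transporting the polarization to an alternating pairing on $S_p^{(r)}(A/F)$ whose radical is $S_p^{(r+1)}(A/F)$. The paper states the rank identity and the filtration-compatibility of $\phi$ without further comment; your extra bookkeeping over the $(\Lambda/J^i)$ summands and the pseudo-null part fills in the same argument rather than changing it.
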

\begin{proof}
By Theorem \ref{vector height} we have the equality
$$\mathrm{dim}_{\Q_p}S_p^{(r)}(A/F)=\rank_{\Z_p} (J^{r-1}X/J^rX)
=e_r+e_{r+1}+e_{r+2}+\cdots +e_\infty
$$
which proves all but the final claim.
A choice of polarization of $A$ determines an isomorphism
$$S_p^{(r)}(A/F)\iso S_p^{(r)}(A^\vee/F)$$
and the induced height pairing on $S_p^{(r)}(A/F)$ is alternating
when $r$ is even, by the last part of Theorem \ref{vector height}.
This implies that $S_p^{(r)}(A/F)/S_p^{(r+1)}(A/F)$ is even dimensional.
\end{proof}

In the case where $F_\infty$ is the cylotomic $\Z_p$-extension,
it is conjectured that $h^{(1)}$
is nondegenerate.  When $F=\Q$ and $A$ is modular
it is known by the work of Kato that $e_\infty=0$
(i.e. $X$ is a torsion module), but it is not known that $e_i=0$ for
$i>1$.

Now suppose that $F$ is a quadratic imaginary field, $F_\infty$ is
the anticyclotomic $\Z_p$-extension, and $A=E\times_\Q F$ for
some  elliptic curve $E_{/\Q}$
satisfying the ``Heegner hypothesis'' that all primes of bad
reduction are split in $F$ (which, in particular, implies
our hypothesis that the primes of bad reduction of $A$ are finitely
decomposed in $F_\infty$).  In this situation it is known by 
the work of Bertolini \cite{bertolini} and Cornut \cite{cornut}
 that $e_\infty=1$, hence
$h^{(r)}$ is degenerate for every $r$.  The next best thing one could hope
for is that $S_p^{(2)}(A/F)$ is one dimensional, hence equal to 
$S_p^{(\infty)}(A/F)$,
but this is still too optimistic.
By Remark \ref{twisting}, $h^{(1)}$ satisfies
$$h^{(1)}(x^\tau,y^\tau)=-h^{(1)}(x,y),$$ where $\tau$ is
complex conjugation.  This forces the plus and minus eigencomponents
of $S_p^{(1)}(A/F)$ under $\tau$ to be self-orthogonal, and so if 
$$s^+=\dim_{\Q_p}S_p^{(1)}(A/F)^+\hspace{1cm}
s^-=\dim_{\Q_p}S_p^{(1)}(A/F)^-,$$
the kernel of $h^{(1)}$ has dimension at least $|s^+-s^-|$.

\begin{Conj}(Bertolini-Darmon, Mazur)
In the situation above, 
the dimension of $S_p^{(2)}(A/F)$ is $|s^+-s^-|$,
and the dimension of $S_p^{(3)}(A/F)$ is $1$.
\end{Conj}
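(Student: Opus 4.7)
The plan is to combine Corollary \ref{semi-simplicity} with the anticyclotomic sign constraints of Remark \ref{twisting} and the Heegner Euler system as input to Theorem \ref{main theorem}. Granting that Bertolini and Cornut give $e_\infty = 1$, Corollary \ref{semi-simplicity}(c) translates the two dimension claims into the assertions $e_2 = |s^+ - s^-| - 1$ and $e_r = 0$ for every finite $r \geq 3$; in particular $|s^+ - s^-|$ had better be odd, which is consistent with parity (d).

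First I would extract the consequences of the $\tau$-symmetry. In the anticyclotomic setting complex conjugation acts on $\Gamma$ by inversion, so Remark \ref{twisting} specializes to $h^{(r)}(x^\tau, y^\tau) = (-1)^r h^{(r)}(x,y)$; combined with the $(-1)^{r+1}$-symmetry of $h^{(r)}(\cdot, \phi(\cdot))$ supplied by Theorem \ref{vector height}, this forces the induced form on $S_p^{(r)}(A/F)$ to have vanishing diagonal blocks in the $\tau$-eigenspace decomposition when $r$ is odd, and vanishing off-diagonal blocks when $r$ is even. Consequently the kernel of $h^{(1)}$ contains an $|s^+ - s^-|$-dimensional subspace for free, and the conjectured equality $\dim S_p^{(2)}(A/F) = |s^+ - s^-|$ is precisely the statement that the off-diagonal block $S_p^{(1),+} \otimes S_p^{(1),-} \to W_1$ is a perfect pairing, i.e.\ that $h^{(1)}$ is as nondegenerate as the $\tau$-symmetry permits.

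Second I would feed the anticyclotomic Heegner system into the formalism of Section \ref{derivative}. The norm-compatible family of Heegner points produces a class $z \in H^1_{\sel^\rel}(F, S_\Iw)$ whose bottom class $z_0$ spans $S_p^{(\infty)}(A/F)$ and which, paired with Kolyvagin-derivative classes, generates a rich supply of elements with predicted orders of vanishing of $\pl_z$. Theorem \ref{main theorem}(c) converts a statement $\mathrm{ord}(\pl_z) = r$ together with non-vanishing of $\lambda^{(r)}_z$ at a test class $c_p$ into the assertion $h^{(r)}(z_0, c) \neq 0$; running this across Kolyvagin-derivative siblings of $z$, one would recover the required nondegeneracy of $h^{(1)}$ on the off-diagonal $\tau$-block and of $h^{(2)}$ modulo the Heegner line, thereby pinning $\dim S_p^{(3)}(A/F) = 1$.

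\textbf{Main obstacle.} The conjecture is open, and the crux is showing that the relevant anticyclotomic $p$-adic $L$-functions --- the Bertolini--Darmon $L$-function of $z$ itself, and its Kolyvagin-derivative analogues --- vanish to \emph{exactly} the predicted order rather than more. Algebraically this is the nondegeneracy of the second derived height on $S_p^{(2)}(A/F)$ modulo the Heegner subspace, a statement that cannot be forced by the purely structural framework of this paper: the sign-symmetry machinery gives only lower bounds on kernels, while upper bounds require an independent nonvanishing input of an analytic or geometric nature. My proposal is therefore honestly conditional on such a second-derivative nonvanishing statement, which I expect to be the genuine content of a future proof.
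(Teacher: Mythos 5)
The statement you were handed is given in the paper as a Conjecture (attributed to Bertolini--Darmon and Mazur), and the paper offers no proof: the text immediately following merely extracts consequences (``Assuming the conjecture, Corollary \ref{semi-simplicity} implies $e_2=|s^+-s^-|-1$\ldots''). Your concluding paragraph, identifying the obstruction as the fact that the sign-symmetry machinery yields only lower bounds on kernels of derived heights while the conjecture asserts sharpness (an independent nonvanishing input), is exactly the correct assessment; there is no paper proof against which to measure a genuine argument.

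That said, your structural bookkeeping matches the paper's own surrounding discussion. Translating the two dimension claims via Corollary \ref{semi-simplicity}(c) and the Bertolini--Cornut input $e_\infty=1$ into $e_2=|s^+-s^-|-1$ and $e_r=0$ for finite $r\ge 3$ is precisely how the paper reads the conjecture, and your parity consistency check against part (d) is sound. The $\tau$-eigenspace analysis of $h^{(1)}$ forcing $\dim\ker h^{(1)}\ge |s^+-s^-|$, and the reformulation of $\dim S_p^{(2)}(A/F)=|s^+-s^-|$ as perfection of the off-diagonal block, is a clean restatement of the paper's observation. One minor caveat: Remark \ref{twisting} only states the equivariance $h(s^\tau,t^\tau)=\omega(\tau)\,h(s,t)$ for $h=h^{(1)}$; your extrapolation to $h^{(r)}(x^\tau,y^\tau)=(-1)^r h^{(r)}(x,y)$ is correct but deserves a short verification from the definition of $h^{(r)}$ (it follows because $\tau$ acts on $J^r/J^{r+1}$ by $(-1)^r$ and $h^{(r)}$ is built from $h$ by twisting with $(\gamma-1)^{r-1}$). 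The Heegner--Kolyvagin program you sketch for supplying the missing nondegeneracy is plausible as a strategy but, as you say, lies outside the scope of the paper's formalism.
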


Assuming the conjecture, Corollary \ref{semi-simplicity} implies that
$e_2=|s^+-s^-|-1$.  Mazur's control theorem gives
$$s^++s^-=\dim_{\Q_p}S^{(1)}_p(A/F)=1+e_1+e_2$$ 
and so
$$X\sim \Lambda\oplus(\Lambda/J)^{e_1}\oplus(\Lambda/J^2)^{e_2}\oplus M'$$
with $e_1=2\min\{s^+,s^-\}$ and $M'$ having
characteristic ideal prime to $J$.

Returning to the general case, we wish to reformulate Theorem
\ref{main theorem} in the present setting.  This is merely an
exercise in passing from results on $A[p^k]$ to 
results on the Tate module $T_p(A)$, although some caution 
is needed, owing to our slightly strange choice of Selmer structures
on $A[p^k]$.  Let $\Sigma$ be the
set of places of $F$ consisting of all archimedean places and all
primes at which $A$ has bad reduction.  Define
\begin{eqnarray*}
H^1_\Iw(F_\Sigma/F_\infty,T_p(A))&=&\mil H^1(F_\Sigma/F_n,T_p(A))\\
Z_\infty&=&\mil \oplus_{v|p}H^1(F_{n,v},T_p(A))\\
Z_{\infty,f}&=&\mil \oplus_{v|p}E(F_{n,v})\otimes\Z_p\\
Z_{\infty,s}&=&Z_\infty/Z_{\infty,f}
\end{eqnarray*}
where we regard $Z_{\infty,f}$ as a $\Lambda$-submodule of $Z_\infty$
via the Kummer map.
Suppose we are given some $z\in H^1_\Iw(F_\Sigma/F_\infty,T_p(A))$
whose image $\pl_z\in Z_{\infty,s}$ actually lands in
$J^rZ_{\infty,s}$ with $r>0$, say $\pl_z=(\gamma-1)^r y$
for some choice of topological generator $\gamma\in\Gamma$
and some $y\in Z_{\infty,s}$. Let $y_0$ denote the image of $y$ in 
$H^1(F_p,T_p(A))/(A(F_p)\otimes\Z_p)$
and define 
$$\lambda^{(r)}_z: A^\vee(F_p)\otimes\Q_p\map{} W_r$$
by $\lambda^{(r)}_z(a)=(y_0,a)\cdot(\gamma-1)^r$, where $(\ ,\ )$ is
the local Tate pairing.

\begin{Thm}\label{height formula}
With notation and definitions as above, let $z_0$ be the image of
$z$ in $H^1(F,T_p(A))$.  Then $z_0\in S_p^{(r)}(A/F)$
and for any $c\in S_p^{(r)}(A^\vee/F)$ we have
$$h^{(r)}(z_0,c)=\lambda^{(r)}_z(c_p)$$
where $c_p$ is the image of $c$ in $A^\vee(F_p)\otimes\Q_p$.
\end{Thm}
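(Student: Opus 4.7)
The plan is to deduce this theorem from Theorem \ref{main theorem} applied at each level $A[p^k]$, then pass to the inverse limit in $k$ using the semi-integral comparison of Lemma \ref{semi-integral} and the construction of the $\Q_p$-valued pairings $h^{(r)}$ in Theorem \ref{vector height}. In other words, this is not a new argument but a passage to the limit of the discrete-coefficient version already established.

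For each $k$, write $z^{(k)} \in H^1(F, A[p^k]_\Iw)$ for the image of $z$ under reduction modulo $p^k$. The first check is that $z^{(k)}$ belongs to the relaxed Selmer group $H^1_{\sel^\rel}(F, A[p^k]_\Iw)$: at primes $v \nmid p$ of good reduction the local class is automatically unramified because $A[p^k]$ is; at primes $v \nmid p$ of bad reduction, the hypothesis that $v$ is finitely decomposed in $F_\infty$ combined with Lemma \ref{unramified propagation} shows the entire unramified local cohomology group is zero, so there is no condition to verify. The image of $z^{(k)}$ in $H^1_\msel(F_p, A[p^k]_\Iw)$ is then the $p$-adic $L$-function $\pl_{z^{(k)}}$ of Definition \ref{L function}, and under local Tate duality it agrees with the reduction modulo $p^k$ of $\pl_z \in Z_{\infty,s}$. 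The hypothesis $\pl_z = (\gamma-1)^r y$ therefore gives $\pl_{z^{(k)}} = (\gamma-1)^r y^{(k)}$ with $y^{(k)}$ the reduction of $y$ modulo $p^k$; because $H^1_\msel(F_v, A[p^k]_\Iw)$ has no torsion under any distinguished element (as observed before Definition \ref{L function}), $y^{(k)} = \Der^r_\gamma(\pl_{z^{(k)}})$ and $r \le \ord(\pl_{z^{(k)}})$.

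Applying Theorem \ref{main theorem}(c) at level $k$ yields $z_0^{(k)} \in Y_k^{(r)}$ together with the identity of level-$k$ derived heights $h^{(r)}(z_0^{(k)}, c^{(k)}) = \lambda^{(r)}_{z^{(k)}}(c_p^{(k)})$ for every class $c^{(k)}$ in the analogue of $Y_k^{(r)}$ for $A^\vee$. Passing to the inverse limit in $k$, the compatible family $\{z_0^{(k)}\}$ defines an element of $Y_\infty^{(r)} = \mil Y_k^{(r)}$; by the very definition of $S_p^{(r)}(A/F)$ as the $\Q_p$-span of the image of $Y_\infty^{(r)}$ under the comparison isomorphism (\ref{comparison}) of Lemma \ref{semi-integral}, this gives $z_0 \in S_p^{(r)}(A/F)$. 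The level-$k$ pairings assemble, after inverting $p$, to the pairing $h^{(r)}$ of Theorem \ref{vector height}, so the identity at each level $k$ passes to $h^{(r)}(z_0, c) = \lim_k \lambda^{(r)}_{z^{(k)}}(c_p^{(k)})$. Each term on the right has the form $(y_0^{(k)}, c_p^{(k)})_k \cdot (\gamma-1)^r$ with $(\ ,\ )_k$ the mod-$p^k$ local Tate pairing, and taking the limit recovers $(y_0, c_p) \cdot (\gamma-1)^r = \lambda^{(r)}_z(c_p)$.

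The main obstacle is not any single step but the careful bookkeeping required to verify that all these identifications commute. One must track, in particular, that local Tate duality on $Z_{\infty,s}$ reduces modulo $p^k$ to the local duality of Section \ref{construction} used at level $k$; that the isomorphism of Lemma \ref{semi-integral} carries the filtration $Y_\infty^{(r)}$ to the filtration $S_p^{(r)}(A/F)$; and that the specialization from the Iwasawa class $y$ to the bottom-layer class $y_0$ commutes with reduction modulo $p^k$. Each piece is formal given the preceding sections, but the uniform-but-nontrivial torsion appearing in Proposition \ref{control} and in Lemma \ref{semi-integral} forces one to work throughout over $\Q_p$ so that the various limits produce exact equalities rather than equalities up to bounded error.
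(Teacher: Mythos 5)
Your overall strategy is the same as the paper's: reduce modulo $p^k$, invoke Theorem \ref{main theorem} at each level, and pass to the inverse limit, with the semi-integral comparison of Lemma \ref{semi-integral} providing the bridge to $S_p^{(r)}(A/F)$. But you have compressed into the phrase ``under local Tate duality it agrees with the reduction modulo $p^k$ of $\pl_z$'' the single genuinely non-formal step of the whole argument, and that is where the proof as written has a gap.

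For $\pl_{z^{(k)}}$ to be ``the reduction mod $p^k$ of $\pl_z$'' there must exist a well-defined map $Z_{\infty,s}\to H^1_{\msel}(F_p,S_\Iw)$ (with $S=A[p^k]$) compatible with the projection $Z_\infty\to H^1(F_p,S_\Iw)$. Since $Z_{\infty,s}=Z_\infty/Z_{\infty,f}$ and $H^1_\msel$ is the quotient by $H^1_\sel(F_p,S_\Iw)$, one must show that the Kummer submodule $Z_{\infty,f}$ maps into the ordinary local condition $H^1_\sel(F_p,S_\Iw)$. That inclusion is not tautological: $Z_{\infty,f}$ is defined via the Kummer map on $A(F_{n,v})\otimes\Z_p$, while $H^1_\sel(F_p,S_\Iw)$ is defined via the filtration $\Fil_v$ on the Tate module, i.e.\ via the kernel of reduction. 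Equating these is exactly the Coates--Greenberg result, and the paper establishes it by dualizing (the dual of the desired inclusion is a statement about $A^\vee[p^k]_\infty$) and then invoking Propositions \ref{greenberg} and \ref{control} applied to $A^\vee$. Your write-up defers this to a closing paragraph of ``careful bookkeeping,'' but it is the load-bearing step, and without it you cannot conclude that $\pl_{z^{(k)}}$ vanishes to order at least $r$ from the hypothesis on $\pl_z$; everything downstream (the application of Theorem \ref{main theorem}(c), the passage to the limit) depends on it. If you fill in this step by dualizing and citing Propositions \ref{greenberg}--\ref{control} for $A^\vee$, the rest of your argument goes through and coincides with the paper's.
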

\begin{proof}
Fix some positive integer $k$.
First note that our assumption that all finite primes of $\Sigma$ are
finitely decomposed in $F_\infty$ implies that $z$ is unramified away from 
$p$, by Corollary B.3.5 of \cite{rubin}.  Hence, if $z(k)$ denotes the
image of $z$ in $\mil H^1(F_n,A[p^k])$ (the limit being over $n$), we have
$z(k)\in H^1_{\sel^\rel}(F,S_\Iw)$ where $S=A[p^k]$, $\sel$ is the Selmer
structure obtained by propagating the Selmer structure of Definition
\ref{ordinary definition} through the injection $S_\Iw\map{}S_K$,
and the notation $\sel^\rel$ has the same meaning as in Section 
\ref{derivative}.  We claim that the image of $z(k)$
in  $H^1_{/\sel}(F,S_\Iw)$, which we shall denote $\pl_z(k)$, is divisible
by $(\gamma-1)^r$. Indeed, it suffices to show that 
the natural map $Z_\infty\map{} H^1(F_p,S_\Iw)$ takes $Z_{\infty,f}$
into $H^1_\sel(F_p,S_\Iw)$, so that we have a well-defined map
$Z_{\infty,s}\map{}H^1_{/\sel}(F,S_\Iw).$
 By Tate local duality, this is equivalent to the condition
 that the natural map 
$$H^1(F_p,T_\infty)\map{}\dlim \oplus_{v|p}H^1(F_{n,v},A^\vee[p^\infty])$$
takes $H^1_{\sel^\vee}(F_p,T_\infty)$ into the image of $\dlim\oplus_{v|p}
(A(F_{n,v})\otimes \Q_p/\Z_p)$ under the Kummer map, 
where $T=A^\vee[p^k]$ and $\sel^\vee$ is the
Selmer structure of Definition \ref{ordinary definition}.  This claim
now follows by replacing $A$ by $A^\vee$ in Propositions 
\ref{greenberg} and \ref{control} (strictly speaking, these
propositions state  that there is a natural map on global Selmer groups
$H^1_{\sel^\vee}(F,T_\infty)\map{}\Sel_{p^\infty}(A^\vee/F_\infty)$,
but the proofs proceed by showing that the isomorphisms of local cohomology 
induced by Shapiro's lemma take the local conditions
defining the left hand side into the local conditions defining the
right hand side).
We have now shown that the order of vanishing of $\pl_z(k)$ is at least $r$,
and so appealing to Theorem \ref{main theorem} we see that $z_0(k)$,
the image of $z(k)$ under
$$H^1(F,S_\Iw)\map{}H^1(F,A[p^k])\map{}H^1(F_\infty,A[p^k])[J],$$ lies in 
the submodule $Y_k^{(r)}$ defined at the beginning of this section.
Passing to the limit over $k$ we have $z_0\in Y_\infty^{(r)}$,
and therefore $z_0\in S_p^{(r)}(A/F)$.

Now fix some $c\in Y^{\vee{(r)}}_\infty$
(the module defined in the same way as $Y_\infty^{(r)}$, but with
$A$ and $\sel$ replaced by $A^\vee$ and $\sel^\vee$).
Passing to the limit in Theorem \ref{main theorem}
we have the equality
$h^{(r)}(z_0, c)=\lambda_z^{(r)}(c_p)$ in $J^r/J^{r+1}$, 
and extending by $\Q_p$-linearity
proves the same result for $c\in S_p^{(r)}(A^\vee/F)$.
\end{proof}

In the special case where $S_p^{(1)}(A/F)$ is one-dimensional and
$S_p^{(\infty)}(A/F)$ is trivial,
the above theorem may be regarded as a $p$-adic, 
cohomological formulation of the 
Birch and Swinnerton-Dyer conjecture.  
In this situation $\exists$ $\delta\ge 1$ such that
$$\dim_{\Q_p}S_p^{(r)}(A/F)=\left\{\begin{array}{ll}
1 & r\le\delta \\
0 & r>\delta.\end{array}\right.$$
If $z_0\not=0$, the left hand side of the equality of the 
theorem can then be interpreted 
as a regulator term.  By the theorem, $\pl_z$ must vanish to order $\delta$,
and the right hand side is like the value of the
$\delta^\mathrm{th}$-derivative.  
This interpretation breaks down when the dimension of
$S_p^{(1)}(A/F)$ is greater than one, since the formula only allows one
to compute the heights of elements against some fixed element $z_0$,
and not all pairwise heights in a basis.

Of course this is only interesting if one can construct an element $z$
for which we are somehow justified in calling 
$\pl_z$ a $p$-adic $L$-function, and
there are essentially two cases where this is understood: when $A/\Q$ is an 
elliptic curve, Kato has constructed such an element for which $\pl_z$
is related to the Mazur--Swinnerton-Dyer $p$-adic $L$-function of $E$,
and when $A$ is a CM elliptic curve the Euler system of elliptic
units is related, by work of Yager, to the two-variable $p$-adic $L$-function
constructed by Katz.  For applications of 
Theorem \ref{height formula} in these special cases, 
see \cite{agboola} for the elliptic unit
case and \cite{rubin-height, rubin-modular} for the case of Kato's Euler 
system.

\end{document}